\theoremstyle{definition}
\theoremstyle{lemma}
\newtheorem{lemma}{Lemma}
\theoremstyle{theorem}
\newtheorem{theorem}{Theorem}
\theoremstyle{assumption}
\newtheorem{assumption}{Assumption}
\newcommand{\td}[2]{\frac{{\rm d}#1}{{\rm d}{ {#2}}}} 
\newcommand{\pd}[2]{\frac{\partial#1}{\partial#2}}
\newcommand{\nor}[1]{\left\| #1 \right\|} 
\newcommand{\LRp}[1]{\left( #1 \right)} 
\newcommand{\LRb}[1]{\left| #1 \right|} 
\newcommand{\LRc}[1]{\left\{ #1 \right\}} 
\newcommand{\LRl}[1]{\left. #1 \right|} 
\newcommand{\jump}[1] {\ensuremath{\left\llbracket#1\right\rrbracket}} 
\newcommand{\avg}[1] {\ensuremath{\LRc{\!\!\LRc{#1}\!\!}}}
\newcommand{\prodmean}[1] {\ensuremath{\LRp{\!\LRp{#1}\!}}}
\newcommand{\fnt}[1]{\bm{\mathsf{ #1}}}
\newcommand{\note}[1]{{\color{blue}{#1}}}
\newcommand{\bnote}[1]{{#1}}
\newcommand{\rnote}[1]{{#1}}
\newtheorem{remark}{Remark}
\begin{document}

\title{High order entropy stable schemes for the quasi-one-dimensional shallow water and compressible Euler equations} 
\author[1]{Jesse Chan}
\author[2]{Khemraj Shukla}
\author[3]{Xinhui Wu}
\author[1]{Ruofeng Liu}
\author[1]{Prani Nalluri}

\affiliation[1]{Department of Computational Applied Mathematics and Operations Research, Rice University, Houston, TX, 77005}
\affiliation[2]{Division of Applied Mathematics, Brown University, Providence, RI 02906}
\affiliation[3]{Matroid, Inc., Palo Alto, CA, 94306}

\begin{abstract}
High order schemes are known to be unstable in the presence of shock discontinuities or under-resolved solution features for nonlinear conservation laws. Entropy stable schemes address this instability by ensuring that physically relevant solutions satisfy a semi-discrete entropy inequality independently of discretization parameters. This work extends high order entropy stable schemes to the quasi-1D shallow water equations and the quasi-1D compressible Euler equations, which model one-dimensional flows through channels or nozzles with varying width. 

We introduce new non-symmetric entropy conservative finite volume fluxes for both sets of quasi-1D equations, as well as a generalization of the entropy conservation condition to non-symmetric fluxes. When combined with an entropy stable interface flux, the resulting schemes are high order accurate, conservative, and semi-discretely entropy stable. For the quasi-1D shallow water equations, the resulting schemes are also well-balanced.
\end{abstract}

\maketitle

\section{Introduction}

Computational fluid dynamics simulations increasingly require higher resolutions for a variety of applications \cite{pi2013cfd}. For certain flows, high order accurate numerical methods are more accurate per degree of freedom compared to low order methods, and provide one avenue towards high accuracy while retaining reasonable efficiency \cite{wang2013high}. 
In this work, we extend high order entropy stable numerical schemes to the quasi-1D shallow water and the quasi-1D compressible Euler equations. An example of a quasi-1D domain is illustrated in Fig.\ \ref{fig:quasi_1d_domains}. Such systems are often used to model one-dimensional flows in domains with varying width, such as channels \cite{bermudez1994upwind} or nozzles \cite{corberan1995tvd, kroner2005numerical}. These systems have the simplicity of 1D equations, but incorporate effects from spatially varying domain widths. 

\begin{figure}[!h]
\centering
\includegraphics[width=.6\textwidth]{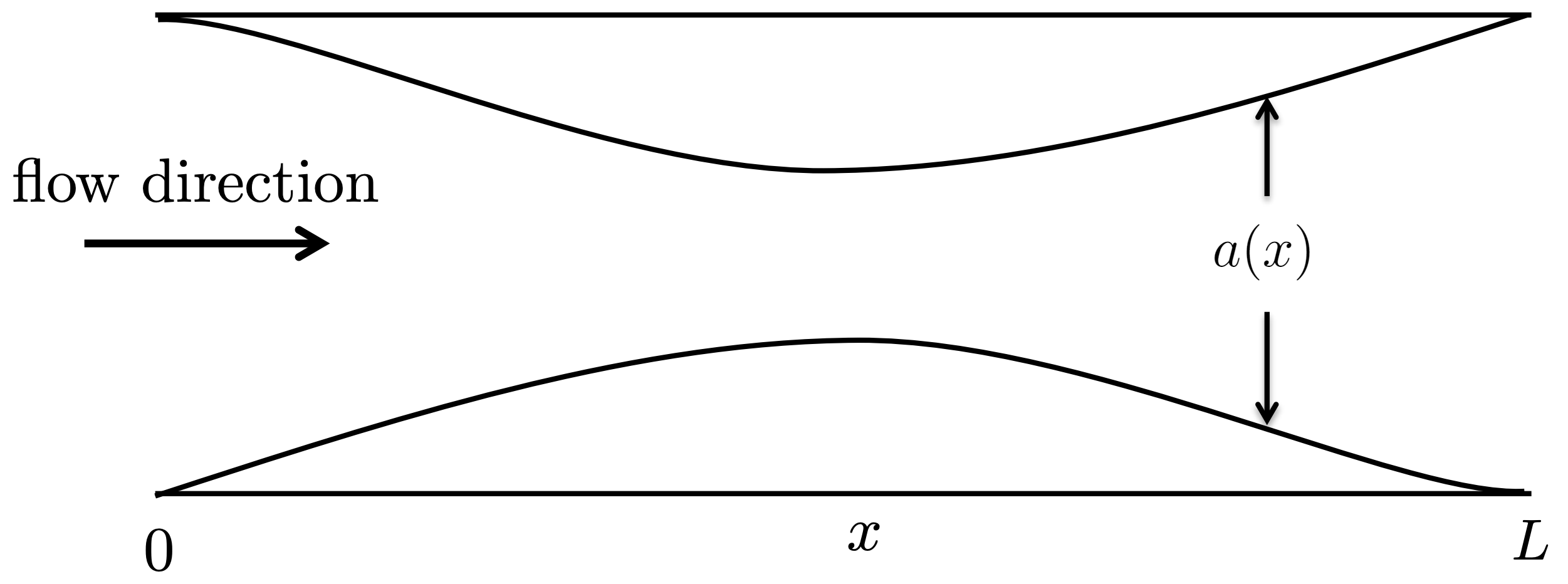}
\caption{An example of a quasi-1D domain with spatially varying channel width $a(x)$ from \cite{carlberg2018conservative}.}
\label{fig:quasi_1d_domains}
\end{figure}

We first review relevant literature for each system of quasi-1D equations. For the quasi-1D shallow water equations, early work focused on the well-balanced property \cite{bermudez1994upwind, vazquez1999improved, garcia2000numerical} for the nonlinear shallow water equations with general channel widths \cite{balbas2009central, hernandez2011shallow, murillo2014accurate}. More recent work has introduced high order accurate discretizations that preserve both well-balancedness and positivity \cite{xing2016high, qian2018positivity} for open channels with variable widths. For the quasi-1D compressible Euler equations, theoretical work includes studies on weak solutions and entropic properties \cite{le1989shock, kroner2008minimum}, while numerical schemes for the quasi-1D compressible Euler equations have included well-balanced schemes \cite{kroner2005numerical, kroner2008minimum, clain2009first} and a variety of treatments of non-conservative source terms which arise during discretization \cite{helluy2012well, gascon2021numerical}.

In this work, we consider schemes for the quasi-1D shallow water and compressible Euler equations which are entropy stable in addition to being high order accurate and well-balanced. These schemes are based on a ``flux differencing'' formulation \cite{fjordholm2012arbitrarily, fisher2013high, gassner2016split, chen2017entropy, chan2018discretely} and satisfy a semi-discrete dissipation of entropy independently of the approximation error. The main contributions of this paper are the construction and analysis of new entropy conservative fluxes which lie at the heart of ``flux differencing'' formulations. In particular, the derived fluxes include non-symmetric terms, which correspond to non-conservative terms in the quasi-1D system of equations. For brevity, we focus on 1D discretizations based on diagonal-norm summation-by-parts (SBP) operators \cite{gassner2013skew, svard2014review}. However, the \rnote{main theoretical tool that we introduce (a non-symmetric generalization of Tadmor's entropy conservative flux condition \cite{tadmor1987numerical}) is straightforward to extend} to more general entropy stable discretizations \cite{parsani2016entropy, chan2018discretely} or multi-dimensional discretizations \cite{wu2021entropy}. 

\section{Entropy analysis for quasi-1D systems}

In this work, we focus on two common quasi-1D systems: the quasi-1D shallow water equations and the quasi-1D compressible Euler equations. These equations are commonly used to model channel flow \cite{hernandez2011shallow} and flow through a nozzle \cite{kroner2008minimum}, respectively. For both quasi-1D systems, it is possible to show that the entropy variables are the same as for the original 1D system, and that the entropy inequality is simply the original entropy inequality scaled by the spatially varying channel width.

\subsection{The quasi-1D shallow water equations} 

First, we consider the 1D shallow water equations in symmetric open channels with varying channel width and bathymetry. This type of domain leads to additional terms involving the gradient of the width and the depth of the channel. The quasi-1D shallow water equations \cite{qian2018positivity, hernandez2011shallow} can be written as follows:
\begin{align}
\frac{\partial }{\partial t}\begin{bmatrix}
ah\\
ahu
\end{bmatrix}+
\frac{\partial }{\partial x}\begin{bmatrix}
ahu\\
ahu^2\\
\end{bmatrix}
+ 
\begin{bmatrix}
0\\
gah\pd{}{x}\LRp{h+b}
\end{bmatrix} = 0.
\label{eq:SWE_quasi_1D}
\end{align}
Here, $h$ is the water height, $b$ is the bottom bathymetry, $u$ is the water velocity, and $a(x)$ denotes the width of the channel at a point $x$. We assume the channel width does not change over time, such that $\frac{\partial }{\partial t} a = 0$. 

These equations reduce to the standard 1D shallow water equations when $a(x)$ is spatially constant. However, the spatial dependence of the width $a(x)$ requires the modification and redesign of 1D entropy stable numerical fluxes for quasi-1D equations to achieve entropy conservation on the discrete level. Furthermore, for the shallow water equations, our numerical schemes should remain well-balanced, such that they preserve the steady state solution $h + b = \text{constant}, u = 0$ for a non-flat bottom topography $b$.

\subsection{The quasi-1D compressible Euler equations}

We will also consider the quasi-one-dimensional compressible Euler equations with symmetric varying nozzle width as follows \cite{courant1999supersonic, giles2001analytic, kroner2008minimum, helluy2012well}:
\begin{align}
\frac{\partial }{\partial t}\begin{bmatrix}
a\rho\\
a\rho u\\
aE
\end{bmatrix}+
\frac{\partial }{\partial x}\begin{bmatrix}
a\rho u\\
a\rho u^2\\
au(E+p)
\end{bmatrix} + 
a\frac{\partial }{\partial x}\begin{bmatrix}
0\\
p\\
0
\end{bmatrix}= 0.
\label{eq:Euler_q1D}
\end{align}
Here, $\rho$ and $p$ denote density and the pressure, respectively. The velocity in the $x$ direction is denoted by $u$. The total energy is denoted by $E$ and satisfies the constitutive relation involving the pressure $p$
\begin{align}
E = \frac{1}{2}\rho u^2 + \frac{p}{\gamma-1},
\end{align}
where $\gamma = 1.4$ is the ratio of specific heat \note{of} a diatomic gas. Again, we assume that the width, $a(x)$, does not change over time.

\subsection{Continuous entropy analysis for quasi-1D systems}

In this section, we introduce entropy-flux pairs for the quasi-1D shallow water and compressible Euler equations. One can derive from the two-dimensional equations that the entropy, entropy flux, and entropy potential for each quasi-1D system are the relevant quantities for the standard 1D system scaled by $a(x)$. For example, let $\bm{u} = [ah,\: ahu]^T$ denote the scaled conservative variables for the shallow water equations. The entropy $S$, entropy flux $F$, and entropy potential, $\psi$ are then given as
\begin{align*}
S(\bm{u}) = \frac{1}{2}ahu^2+\frac{1}{2}gah^2 + gahb, \qquad F(\bm{u}) = \frac{1}{2}a h u^2 + agh^2u + gahbu, \qquad \psi = \frac{1}{2} gah(h+b)u.
\end{align*}
Similarly, for the compressible Euler equations, we define the scaled conservative variables $\bm{u} = [a\rho,\: a\rho u,\: aE]^T$. The entropy $S$, entropy flux $F$, and entropy potential $\psi$ are:
\begin{align*}
S(\bm{u}) = \frac{- a \rho s}{\gamma -1}, \qquad s = \log\LRp{\frac{p}{\rho^\gamma}}, \qquad F(\bm{u}) = \frac{-a\rho u s}{\gamma -1}, \qquad \psi = a\rho u,
\end{align*}
\rnote{Note that $S(\bm{u})$ are entropies for the original 1D shallow water and 1D compressible Euler equations scaled by $a$. We first note that we can prove $S(\bm{u})$ is a convex entropy with respect to $\bm{u}$. Additionally, if the conservative variables and entropy are both scaled by $a(x)$ for the quasi-1D shallow water and compressible Euler equations, the entropy variables for the quasi-1D shallow water and compressible Euler equations are the same as the entropy variables for the standard shallow water and compressible Euler equations. This is a consequence of the following lemma:
\begin{lemma}
Let $\eta(\bm{q})$ denote a differentiable convex entropy with respect to $\bm{q}$. Let $\bm{u} = a\bm{q}$, where $a(x) > 0$ is some scalar function, and let $S(\bm{u}) = a\eta(\bm{u} / a)$. Then, $S(\bm{u})$ is a differentiable convex function with respect to $\bm{u}$ and $\pd{S}{\bm{u}} = \pd{\eta}{\bm{q}}$.
\end{lemma}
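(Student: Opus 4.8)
The plan is to treat the width $a = a(x)$ as a fixed positive scalar at each spatial point and to read $S(\bm{u}) = a\eta(\bm{u}/a)$ as the composition of the convex function $\eta$ with the invertible linear rescaling $\bm{u} \mapsto \bm{u}/a$, followed by multiplication by the positive constant $a$. Since $\eta$ is differentiable by hypothesis and both the rescaling and the scalar multiplication are smooth, differentiability of $S$ with respect to $\bm{u}$ follows immediately from the chain rule; the only structural input needed beyond this is $a > 0$, which guarantees that $\bm{u}/a$ is well-defined and that the rescaling is a bijection.

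First I would compute the gradient. Writing $\bm{q} = \bm{u}/a$ so that $\pd{q_j}{u_i} = \delta_{ij}/a$, the chain rule gives
\begin{equation*}
\pd{S}{u_i} = a \cdot \pd{}{u_i}\eta\LRp{\bm{u}/a} = a\sum_j \pd{\eta}{q_j}\,\pd{q_j}{u_i} = a\sum_j \pd{\eta}{q_j}\frac{\delta_{ij}}{a} = \pd{\eta}{q_i}.
\end{equation*}
The factor of $a$ multiplying $\eta$ exactly cancels the factor $1/a$ produced by differentiating the argument $\bm{u}/a$, so $\pd{S}{\bm{u}} = \pd{\eta}{\bm{q}}$ evaluated at $\bm{q} = \bm{u}/a$, which is the claimed gradient identity.

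Next I would differentiate once more to obtain the Hessian. Applying the same chain rule to $\pd{S}{\bm{u}} = \pd{\eta}{\bm{q}}$ evaluated at $\bm{q} = \bm{u}/a$ yields
\begin{equation*}
\pdn{S}{\bm{u}}{2} = \frac{1}{a}\,\pdn{\eta}{\bm{q}}{2},
\end{equation*}
a single positive scalar multiple of the Hessian of $\eta$. Since $\eta$ is convex in $\bm{q}$, its Hessian is symmetric positive semidefinite (positive definite under the usual strict convexity assumption for an entropy), and multiplication by $1/a > 0$ preserves this property; hence $S$ is convex in $\bm{u}$.

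I expect no serious obstacle here: the result is essentially the statement that composing a convex function with an invertible linear map and scaling by a positive constant preserves convexity and transforms the gradient in the stated way. The only point requiring care is to keep $a$ fixed throughout — the differentiation is with respect to the conservative variables at a fixed spatial location, not with respect to $x$ — and to invoke $a > 0$ at the two places where it matters: well-definedness of the argument $\bm{u}/a$ and preservation of positive definiteness of the Hessian.
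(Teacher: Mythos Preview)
Your proposal is correct and matches the paper's own proof almost exactly: both compute $\pd{S}{\bm{u}}$ via the chain rule, cancelling the outer factor $a$ against the inner $1/a$. The only stylistic difference is in the convexity step --- the paper invokes preservation of convexity under affine maps directly, whereas you compute the Hessian $\pdn{S}{\bm{u}}{2} = \frac{1}{a}\pdn{\eta}{\bm{q}}{2}$; note that the Hessian route tacitly assumes $\eta$ is twice differentiable, while the lemma only hypothesizes differentiability, so the affine-composition argument you also mention is the cleaner justification here.
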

\begin{proof}
The convexity of the quasi-1D definitions of $S(\bm{u})$ is a consequence of the fact that $\eta(\bm{q})$ is convex with respect to $\bm{q}$, which implies that $\eta(a\bm{q})$ is convex with respect to $\bm{u} =  a\bm{q}$ since convexity is preserved under affine mappings. The remainder of the proof follows by the chain rule:
\[
\pd{S(\bm{u})}{\bm{u}} = a\pd{\eta\LRp{\bm{u}/a}}{\bm{u}} =  a\pd{\eta}{\bm{q}} \cdot \pd{\bm{u}/a}{\bm{u}} = a\pd{\eta}{\bm{q}}\frac{1}{a} = \pd{\eta}{\bm{q}}.
\]
\end{proof}
This lemma implies that, if the quasi-1D entropy is simply the $a$-scaled entropy of the original 1D system, then the entropy variables for the quasi-1D version of a system of nonlinear conservation laws are identical to the entropy variables of the original 1D system.
}

%

For these entropy flux pairs, solutions of the quasi-1D shallow water and compressible Euler equations satisfy an entropy inequality (for appropriate boundary conditions) \cite{castro2013entropy, kroner2008minimum}
\[
\pd{S}{t} + \pd{F}{x} \leq 0.
\]
We provide derivations of entropy conservation for sufficiently regular solutions in \ref{appendix:entropy_analysis}.

\section{Entropy conservative numerical fluxes for quasi-1D systems}

The main contribution of this work is the construction of numerical schemes which mimic the entropy stability of the continuous system. Entropy conservative numerical fluxes in the sense of Tadmor \cite{tadmor1987numerical} are a key component of such schemes, and we derive new entropy conservative fluxes for the quasi-1D shallow water and compressible Euler equations. However, because quasi-1D equations are not conservative systems, the resulting fluxes are no longer symmetric. Motivated by this fact, we introduce an alternative definition of an entropy conservative flux:
\begin{assumption}
Throughout this work, we will assume that the flux $\bm{f}_{EC}$ satisfies an entropy conservation condition:
\begin{align}
\fnt{v}_L^T\bm{f}_{EC}(\fnt{u}_L,\fnt{u}_R) - \fnt{v}_R^T\bm{f}_{EC}(\fnt{u}_R,\fnt{u}_L) &= \psi(\fnt{u}_L) - \psi(\fnt{u}_R). \label{eq:nonsym_cond_LR}
\end{align}
We will refer to an entropy stable flux as any flux $\bm{f}^*$ which satisfies an entropy dissipation condition: 
\begin{align}
\fnt{v}_L^T\bm{f}^*(\fnt{u}_L,\fnt{u}_R) - \fnt{v}_R^T\bm{f}^*(\fnt{u}_R,\fnt{u}_L) &\geq \psi(\fnt{u}_L) - \psi(\fnt{u}_R). \label{eq:nonsym_cond_LR_ES}
\end{align}
We do not assume that these fluxes satisfy consistency or symmetry conditions. 
\end{assumption}
Note that \bnote{(\ref{eq:nonsym_cond_LR})} reduces to the standard Tadmor condition if the flux is symmetric \cite{tadmor1987numerical}. It is also possible to treat the quasi-1D equations as nonlinear hyperbolic systems in non-conservative form using the framework of \cite{castro2013entropy, renac2019entropy, waruszewski2022entropy}. However, due to the simple structure of the non-conservative terms in the quasi-1D shallow water and compressible Euler equations, we opt instead for a more direct approach to proving consistency and entropy stability in this paper.

\subsection{The quasi-1D shallow water equations}

We propose the following numerical fluxes with bathymetry based on the entropy conservative fluxes from \cite{chen2017entropy}:
\begin{align}
f_h &= \avg{ahu} \nonumber\\
f_{hu} &= \avg{ahu}\avg{u} + \boxed{\frac{g}{2}a_L h_L(h_R+b_R)}. \label{eq:SWE_quasi_1D_flux}
\end{align}
These fluxes provide consistent and symmetric approximations of all conservative terms in the shallow water equations, but also introduce non-symmetric terms which involve the width $a_L$ (boxed). Note that while entropy analysis for the shallow water equations typically requires special steps to account for non-constant bathymetry \cite{fjordholm2011well, wintermeyer2018entropy, wu2021high}, the non-symmetric Tadmor condition accounts automatically for the presence of non-constant bathymetry $b(x)$. 
 
Recall that the \rnote{components $v_1, v_2$ of the entropy variables $\bm{v} = [v_1, v_2]$} for the quasi-1D shallow water equations are
\begin{align*}
v_1 =  g(h + b)-\frac{1}{2}u^2, \qquad
v_2 = u,
\end{align*}
with entropy potential $\psi = \frac{1}{2} gahu(h+b)$. We can prove the entropy conservation condition (\ref{eq:nonsym_cond_LR}) by exploiting symmetry of several flux terms
\begin{align*}
\fnt{v}_L^T\bm{f}_{EC}(\fnt{u}_L,\fnt{u}_R) - \fnt{v}_R^T\bm{f}_{EC}(\fnt{u}_R,\fnt{u}_L) 
=& \jump{g(h + b)-\frac{1}{2}u^2}\avg{ahu} + \jump{u}\avg{ahu}\avg{u} \\
&+ u_L \frac{g}{2}a_L h_L(h_R+b_R) - u_R \frac{g}{2}a_R h_R(h_L+b_L).
\end{align*}
Since $\avg{u}\jump{u} = \jump{\frac{1}{2}u^2}$, we have that
\begin{align*}
\jump{g(h + b)-\frac{1}{2}u^2}\avg{ahu} + \jump{u}\avg{ahu}\avg{u} &= \jump{g(h + b)-\frac{1}{2}u^2 + \frac{1}{2}u^2}\avg{ahu} \\
&= \jump{g(h+b)}\avg{ahu}.
\end{align*}
Then, we observe that $\fnt{v}_L^T\bm{f}_{EC}(\fnt{u}_L,\fnt{u}_R) - \fnt{v}_R^T\bm{f}_{EC}(\fnt{u}_R,\fnt{u}_L) $ reduces to
\begin{align*}
\jump{g(h+b)}\avg{ahu} + 
&u_L \frac{g}{2}a_L h_L(h_R+b_R) - u_R \frac{g}{2}a_R h_R(h_L+b_L) \\
=& g\LRp{ \avg{a h u} (h_R + b_R) - \avg{ahu} (h_L + b_L)}\\
+& g\LRp{ \frac{a_L h_L u_L}{2} (h_R + b_R) - \frac{a_R h_R u_R}{2} (h_L + b_L)}\\
=& \frac{g}{2}\LRp{ a_R h_R u_R (h_R + b_R) - a_Lh_Lu_L (h_L + b_L)} = \psi(\bm{u}_L) - \psi(\bm{u}_R).
\end{align*}

\subsection{The quasi-1D compressible Euler equations}

\rnote{As with the shallow water equations, we will generalize an existing entropy conservative flux for the compressible Euler equations to the quasi-1D setting. There are several fluxes which satisfy an entropy conservation condition \cite{ismail2009affordable, chandrashekar2013kinetic, ranocha2018comparison}; we will focus on the numerical fluxes from Ranocha \cite{ranocha2018comparison}. In addition to being entropy conservative, these fluxes are the only entropy conservative numerical flux which (for constant velocities) is also kinetic energy preserving, pressure equilibrium preserving, and has a mass flux which is pressure-independent \cite{ranocha2021preventing}.

To account for spatially varying width $a(x)$ in the quasi-1D Euler equations, we introduce a quasi-1D generalization of Ranocha's fluxes in \cite{ranocha2018comparison} as follows:} 
\begin{align}
    f_{\rho} &= \avg{\rho}_{\log}\avg{au}, \nonumber \\
    f_{\rho u} &= \avg{\rho}_{\log}\avg{au}\avg{u} + \boxed{a_L\avg{p}}, \label{eq:Euler_quasi_1D_flux}\\
    f_{E} &= \frac{1}{2}\avg{\rho}_{\log}\avg{au}\prodmean{u \cdot u} + \frac{1}{\gamma -1}\avg{\rho}_{\log}\avg{ \rho / p }^{-1}_{\log}\avg{au}+\prodmean{p \cdot au},  \nonumber   
\end{align}
with logarithmic and product means
\[
\avg{\rho}_{\log} \coloneqq \frac{\jump{\rho}}{\jump{\log \thickspace \rho}} = \frac{\rho_L-\rho_R}{\log(\rho_L)-\log (\rho_R)}, \qquad \prodmean{u \cdot v} \coloneqq \frac{u_L v_R + u_R v_L}{2}.
\]
Again, the non-symmetric part is boxed for ease of identification. For all numerical experiments, we evaluate the logarithmic mean using the numerically stable method of Ismail and Roe \cite{ismail2009affordable} as implemented in Trixi.jl \cite{schlottkelakemper2021purely, ranocha2021efficient, ranocha2022adaptive}. Like the numerical fluxes for the quasi-1D shallow water equations, the numerical fluxes for the quasi-1D compressible Euler equations are consistent but not symmetric. 

Recall that the \rnote{components $v_1, v_2, v_3$ of the} entropy variables $\bm{v}$ for the quasi-1D compressible Euler equations are
\begin{align*}
v_1 = \frac{\gamma - s}{\gamma -1} - \frac{\rho u^2}{2p}, \qquad 
v_2 =  \frac{\rho u}{p}, \qquad
v_3 =  \frac{-\rho}{p}.
\end{align*}
where $s = \log\LRp{\frac{p}{\rho^\gamma}}$. We now show how to prove the non-symmetric Tadmor condition (\ref{eq:nonsym_cond_LR}). 

First, note that for symmetric flux terms, (\ref{eq:nonsym_cond_LR}) reduces to the standard Tadmor condition involving the jump of the entropy variables. We thus expand out $\fnt{v}_L^T\bm{f}_{EC}(\fnt{u}_L,\fnt{u}_R) - \fnt{v}_R^T\bm{f}_{EC}(\fnt{u}_R,\fnt{u}_L)$ into several terms:
\begin{align}
\fnt{v}_L^T\bm{f}_{EC}(\fnt{u}_L,\fnt{u}_R) &- \fnt{v}_R^T\bm{f}_{EC}(\fnt{u}_R,\fnt{u}_L) = \nonumber\\
&f_\rho \jump{\frac{\gamma - s}{\gamma -1} - \frac{\rho u^2}{2p}} \label{eq:term1}\\
+&f_\rho\avg{u}\jump{\frac{\rho u}{p}} \label{eq:term2}\\ 
+&a_L\avg{p}\frac{\rho_L u_L}{p_L} - a_R\avg{p}\frac{\rho_R u_R}{p_R} \label{eq:term3}\\
+&\frac{1}{2}f_\rho u_L u_R \jump{-\frac{\rho}{p}} \label{eq:term4}\\
+&\frac{1}{\gamma-1}f_\rho \avg{\frac{\rho}{p}}_{\log}^{-1} \jump{-\frac{\rho}{p}} \label{eq:term5}\\
+&\frac{1}{2}\LRp{p_L a_R u_R + p_R a_L u_L} \jump{-\frac{\rho}{p}} \label{eq:term6},
\end{align}
where we have introduced $f_\rho = \avg{\rho}_{\log} \avg{au}$ for brevity.

First, consider the sum of (\ref{eq:term1}), (\ref{eq:term2}), and (\ref{eq:term4}). Straightforward computations show that the term $\avg{u}\jump{\frac{\rho u}{p}}$ in (\ref{eq:term2}) expands out to
\[
\avg{u}\jump{\frac{\rho u}{p}} = \jump{\frac{\rho u^2}{2p}} + \frac{1}{2} \jump{\frac{\rho}{p}} u_L u_R.
\]
Thus, adding (\ref{eq:term1}), (\ref{eq:term2}), and (\ref{eq:term4}) together yields
\[
f_\rho \LRp{\jump{\frac{\gamma - s}{\gamma -1} - \frac{\rho u^2}{2p}}  + \jump{\frac{\rho u^2}{2p}} + \frac{1}{2} \jump{\frac{\rho}{p}} u_L u_R + \frac{1}{2}u_L u_R \jump{-\frac{\rho}{p}}}  = f_\rho {\jump{\frac{\gamma - s}{\gamma -1}}} = \frac{-1}{{\gamma -1}} f_\rho {\jump{s}}.
\]
where we have used that $\gamma$ is constant in the last step. 
Next, we consider (\ref{eq:term5}) $\frac{1}{\gamma-1}f_\rho \avg{\frac{\rho}{p}}_{\log}^{-1} \jump{-\frac{\rho}{p}}$. Note that
\[
\avg{\frac{\rho}{p}}_{\log}^{-1} \jump{-\frac{\rho}{p}} = -\frac{\jump{\log\LRp{\frac{\rho}{p}}}}{\jump{\frac{\rho}{p}}}\jump{\frac{\rho}{p}} = -\jump{\log\LRp{\frac{\rho}{p}}}.
\]
Then, (\ref{eq:term5}) reduces to 
\[
\frac{1}{\gamma-1}f_\rho \avg{\frac{\rho}{p}}_{\log}^{-1} \jump{-\frac{\rho}{p}} = \frac{-1}{\gamma - 1} f_\rho \jump{\log\LRp{\frac{\rho}{p}}}.
\]
Summing (\ref{eq:term1}), (\ref{eq:term2}), (\ref{eq:term4}), and (\ref{eq:term5}) and using that $s = \log\LRp{\frac{p}{\rho^\gamma}}$ then yields
\begin{align}
\frac{-1}{\gamma -1} f_\rho \jump{\log\LRp{\frac{p}{\rho^\gamma}} + \log\LRp{\frac{\rho}{p}}} &= \frac{-1}{\gamma -1} f_\rho \jump{\log\LRp{p} - \gamma \log\LRp{\rho} + \log\LRp{\rho} - \log\LRp{p}} \nonumber\\
&= \frac{-1}{\gamma -1} f_\rho \jump{ (1 - \gamma) \log\LRp{\rho}} \nonumber\\
&= f_\rho \jump{ \log\LRp{\rho}} = \avg{au}\jump{\rho}, \label{eq:term1245}
\end{align}
where we have used the definition of $f_\rho = \avg{\rho}_{\log} \avg{au} = {\jump{\rho}}/{\jump{\log(\rho)}}$ in the final step.

We now simplify (\ref{eq:term1245}), (\ref{eq:term3}), and (\ref{eq:term6}). The term (\ref{eq:term3}) can be simplified to
\[
a_L\avg{p}\frac{\rho_L u_L}{p_L} - a_R\avg{p}\frac{\rho_R u_R}{p_R} = \frac{1}{2} \LRp{\jump{a \rho u} + a_L \rho_L u_L \frac{p_R}{p_L} - a_R \rho_R u_R \frac{p_L}{p_R}}.
\]
Similarly, we can simplify (\ref{eq:term6}) to 
\[
\frac{1}{2}\LRp{p_L a_R u_R + p_R a_L u_L} \jump{-\frac{\rho}{p}} = \frac{1}{2} \LRp{-a_L \rho_L u_L \frac{p_R}{p_L}+ a_R \rho_R u_R \frac{p_L}{p_R} -\rho_L a_R u_R + \rho_R a_L u_L}.
\]
Finally, we can simplify (\ref{eq:term1245}) to 
\[
\avg{au}\jump{\rho} = \frac{1}{2}(a_L u_L + a_R u_R)(\rho_L - \rho_R) = \frac{1}{2}\LRp{\jump{a\rho u} + \rho_L a_R u_R - \rho_R a_L u_L}.
\]
Summing the simplified versions of (\ref{eq:term1245}), (\ref{eq:term3}), and (\ref{eq:term6}) together yields that $\fnt{v}_L^T\bm{f}_{EC}(\fnt{u}_L,\fnt{u}_R) - \fnt{v}_R^T\bm{f}_{EC}(\fnt{u}_R,\fnt{u}_L) = \jump{a\rho u} = \jump{\psi(\bm{u})}$.

\section{Entropy stable flux differencing schemes for quasi-1D systems}

We wish to derive entropy conservative fluxes for the quasi-1D shallow water equations and for the quasi-1D compressible Euler equations. We will then construct numerical fluxes and perform an analysis based on an algebraic entropy stable formulation. 

\subsection{Notation}

For the remainder of the paper, we use \rnote{that $\circ $ denotes the Hadamard product (e.g., the element-wise product) of two vectors or matrices with the same dimensions.}

\rnote{We will also use $\avg{\fnt{a}} = \frac{1}{2}\LRp{\fnt{a}_L + \fnt{a}_R}$ to denote the arithmetic mean between two states $\fnt{a}_L, \fnt{a}_R$. This will also be used to denote the average of the solution at the $i$th node and the solution at the $j$th node, e.g., $\avg{\fnt{a}} = \frac{1}{2}(\fnt{a}_i + \fnt{a}_j)$, where $i$ and $j$ are indices which appear in the context of proofs of entropy conservation. We will also use the jump $\jump{a} = a_L - a_R$ to denote the difference between two states $a_L, a_R$.}

\subsection{Algebraic formulation in terms of SBP operators}

We analyze a one-dimensional flux differencing formulation written in terms of a summation-by-parts (SBP) differentiation matrix $\fnt{Q}$, a boundary matrix $\fnt{B}$, and a diagonal reference mass matrix $\fnt{M}$ on a reference element $[-1,1]$. These matrices satisfy the following properties
\[
\fnt{M}_{ii} > 0, \qquad \fnt{Q}+\fnt{Q}^T = \fnt{B}, \qquad \fnt{Q}\fnt{1} = \fnt{0}.
\]
Suppose our domain $\Omega$ is now decomposed into multiple interval subdomains $D^k$, each of which has some size $\LRb{D^k} = h_k$. Transforming the PDE to the reference interval allows us to define a local formulation (similar to local formulations for discontinuous Galerkin (DG) methods \cite{hesthaven2007nodal}) over each element:
\begin{align}
\fnt{M}_k\td{\fnt{u}}{t} + \LRp{\LRp{\fnt{Q}-\fnt{Q}^T} \circ \fnt{F}}\fnt{1} + \fnt{B}\bm{f}^*(\fnt{u}, \fnt{u}^+) = \fnt{0}, \qquad \fnt{F}_{ij} = \bm{f}_{EC}(\bm{u}_i, \bm{u}_j).
\label{eq:global_DG_formulation_dis}
\end{align}
where $\fnt{u}^+$ denotes the ``exterior'' solution state at an element interface or domain boundary, and can be used to weakly enforce either continuity between elements or appropriate boundary conditions. 
We have also introduced the physical mass matrix $\fnt{M}_k = h_k\fnt{M}$, as well as the interface numerical flux $\bm{f}^*$ (to be specified later) and the flux matrix $\fnt{F}$ whose entries correspond to evaluations of the numerical flux $\bm{f}_{EC}(\bm{u}_L, \bm{u}_R)$. 

Note that since the flux is non-symmetric, the order of the arguments for the boundary flux is important. This general form will be used to analyze the entropy stability of multi-domain summation-by-parts finite difference schemes, as well as discontinuous Galerkin methods. 

\begin{remark}
Using the SBP property, we can rewrite (\ref{eq:global_DG_formulation_dis}) in ``strong form'' 
\begin{align}
\fnt{M}_k\td{\fnt{u}}{t} + 2\LRp{\fnt{Q} \circ \fnt{F}}\fnt{1} + \fnt{B}\LRp{\bm{f}_{EC}(\fnt{u}, \fnt{u}^+) - \bm{f}_{EC}(\fnt{u}, \fnt{u})} = \fnt{0}.
\label{eq:global_DG_formulation_strong}
\end{align}
This form is more convenient for analyzing conservation and high order accuracy.
\end{remark}

\subsection{Semi-discrete entropy analysis}

\rnote{We now show how to derive a semi-discrete entropy inequality from (\ref{eq:global_DG_formulation_dis}). We note that this derivation assumes positivity of appropriate variables (e.g., $h$ for the quasi-1D shallow water equations, density and pressure for the quasi-1D compressible Euler equations. We also note that the formulations presented in this work do not guarantee positivity of such variables, and appropriate fully-discrete limiting techniques must be used to enforce positivity \cite{qian2018positivity, zhang2017positivity, DZANIC2022111501, lin2023positivity}. These techniques fall outside the scope of this paper, and will be considered in future work.}

If we multiply Eq.\ (\ref{eq:global_DG_formulation_dis}) by the entropy variables $\bm{v}^T$, we have 
\begin{align}
\fnt{v}^T\fnt{M}_k\td{\fnt{u}}{t} + \fnt{v}^T\LRp{\LRp{\fnt{Q}-\fnt{Q}^T}\circ \fnt{F}}\fnt{1} + \fnt{v}^T\fnt{B}\bm{f}^*(\fnt{u}, \fnt{u}^+) = \fnt{0}.
\label{eq:global_DG_entropy_dis}
\end{align}
Because $\fnt{M}_k$ is diagonal, we have that 
\[
\fnt{v}^T\fnt{M}_k\td{\fnt{u}}{t} = \sum_i \fnt{v}_i \fnt{M}_{k, ii} \td{\fnt{u}_i}{t} = \sum_i \fnt{M}_{k, ii} \LRl{\pd{S}{\bm{u}}}_{\fnt{v}_i}  \td{\fnt{u}_i}{t} = \sum_i \fnt{M}_{k, ii} \td{S(\fnt{u}_i)}{t} = \fnt{1}^T\fnt{M}_k\td{S(\fnt{u})}{t}.
\]
Thus, we have that a scheme is entropy conservative such that $\fnt{1}^T\fnt{M}\td{S(\fnt{u})}{t} = 0$ if 
\begin{align}
\fnt{v}^T\LRp{2\fnt{Q}\circ \fnt{F}}\fnt{1}  = \fnt{0}.
\label{eq:global_DG_entropy_dis_rhs}
\end{align}
Typical proofs of entropy conservation assume that $\bm{f}_{EC}$ is symmetric \cite{chan2018discretely}. However, these approaches cannot be directly applied to the quasi-1D equations because they cannot be written in conservative form (due to the presence of non-symmetric terms involving $a(x)$). As a result, the numerical fluxes $\bm{f}_{EC}(\bm{u}_L, \bm{u}_R)$ are no longer symmetric. However, the proof of entropy conservation can be modified to account for asymmetry in the flux. Instead, we can derive that 
\begin{align*}
\fnt{v}^T\LRp{\LRp{\fnt{Q} - \fnt{Q}^T}\circ \fnt{F}}\fnt{1} &= \sum_{i,j} \LRp{\fnt{Q}_{ij} - \fnt{Q}_{ji}}\fnt{v}_i^T\bm{f}_{EC}(\fnt{u}_i,\fnt{u}_j) \\
&= \sum_{i,j} \fnt{Q}_{ij}\fnt{v}_i^T\bm{f}_{EC}(\fnt{u}_i,\fnt{u}_j) - \sum_{i,j} \fnt{Q}_{ji}\fnt{v}_i^T\bm{f}_{EC}(\fnt{u}_i,\fnt{u}_j).
\end{align*}
Exchanging indices in the second sum allows us to simplify this expression to 
\begin{equation}
\fnt{v}^T\LRp{\LRp{\fnt{Q}-\fnt{Q}^T} \circ \fnt{F}}\fnt{1} = \sum_{i,j} \fnt{Q}_{ij}\LRp{\fnt{v}_i^T\bm{f}_{EC}(\fnt{u}_i,\fnt{u}_j) - \fnt{v}_j^T\bm{f}_{EC}(\fnt{u}_j,\fnt{u}_i)}.
\label{eq:entropy_residual}
\end{equation}
If the non-symmetric entropy conservation condition (\ref{eq:nonsym_cond_LR}) holds, then
\begin{equation}
\fnt{v}_i^T\bm{f}_{EC}(\fnt{u}_i,\fnt{u}_j) - \fnt{v}_j^T\bm{f}_{EC}(\fnt{u}_j,\fnt{u}_i) = \psi(\fnt{u}_i) - \psi(\fnt{u}_j)
\label{eq:nonsym_cond}
\end{equation}
and (\ref{eq:entropy_residual}) reduces to
\begin{align*}
\fnt{v}^T\LRp{\LRp{\fnt{Q}-\fnt{Q}^T}\circ \fnt{F}}\fnt{1} &= \sum_{i,j} \fnt{Q}_{ij}\LRp{\fnt{v}_i^T\bm{f}_{EC}(\fnt{u}_i,\fnt{u}_j) - \fnt{v}_j^T\bm{f}_{EC}(\fnt{u}_j,\fnt{u}_i)} \\
&= \sum_{i,j} \fnt{Q}_{ij} \LRp{\psi(\fnt{u}_i) - \psi(\fnt{u}_j)} \\
&= \psi(\fnt{u})^T\fnt{Q}\fnt{1} - \fnt{1}^T\fnt{Q}\psi(\fnt{u}) = - \fnt{1}^T\fnt{B}\psi(\fnt{u}).
\end{align*}
where we have used that $\fnt{Q}\fnt{1} = \fnt{0}$ and the SBP property in the final step. We summarize this in the following theorem:
\begin{theorem}
\label{thm:ec_local}
Assume that the entropy conservative flux $\bm{f}_{EC}$ satisfies the non-symmetric Tadmor condition (\ref{eq:nonsym_cond_LR}). Then, (\ref{eq:global_DG_formulation_dis}) satisfies the following local statement of entropy conservation:
\[
\fnt{1}^T \fnt{M}_k \td{S(\fnt{u})}{t} + \fnt{1}^T\fnt{B}\LRp{\fnt{v}^T\bm{f}^*(\fnt{u},\fnt{u}^+) - \psi(\fnt{u})} = 0.
\]
\end{theorem}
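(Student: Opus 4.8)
The plan is to contract the semi-discrete formulation (\ref{eq:global_DG_formulation_dis}) on the left with the entropy variables, producing the three terms in (\ref{eq:global_DG_entropy_dis}), and then to evaluate each term separately, mirroring the computation carried out just above the theorem statement. The goal is to show that the time-derivative term collapses to a total derivative of the discrete entropy, that the volume term built from $\fnt{Q}-\fnt{Q}^T$ reduces to the pure boundary contribution $-\fnt{1}^T\fnt{B}\psi(\fnt{u})$, and that the leftover interface term combines with this boundary contribution to yield the stated balance.

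First I would treat the time-derivative term $\fnt{v}^T\fnt{M}_k\td{\fnt{u}}{t}$. Because $\fnt{M}_k$ is diagonal, the contraction decouples into a weighted nodal sum, and at each node the key identity is $\bm{v} = \pd{S}{\bm{u}}$, which is exactly what the earlier lemma provides: the quasi-1D entropy variables coincide with the $1$D entropy variables, so $\fnt{v}_i^T\td{\fnt{u}_i}{t} = \td{S(\fnt{u}_i)}{t}$ by the chain rule. Reassembling against the diagonal weights then gives $\fnt{v}^T\fnt{M}_k\td{\fnt{u}}{t} = \fnt{1}^T\fnt{M}_k\td{S(\fnt{u})}{t}$.

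The central step is the volume term $\fnt{v}^T\LRp{\LRp{\fnt{Q}-\fnt{Q}^T}\circ\fnt{F}}\fnt{1}$. Writing it as $\sum_{i,j}\LRp{\fnt{Q}_{ij}-\fnt{Q}_{ji}}\fnt{v}_i^T\bm{f}_{EC}(\fnt{u}_i,\fnt{u}_j)$ and relabeling indices in the $\fnt{Q}_{ji}$ half of the sum, I would collect the entries into the skew combination $\fnt{v}_i^T\bm{f}_{EC}(\fnt{u}_i,\fnt{u}_j)-\fnt{v}_j^T\bm{f}_{EC}(\fnt{u}_j,\fnt{u}_i)$ as in (\ref{eq:entropy_residual}). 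This is precisely the left-hand side of the non-symmetric Tadmor condition (\ref{eq:nonsym_cond_LR}), so by assumption each such combination equals $\psi(\fnt{u}_i)-\psi(\fnt{u}_j)$. The resulting sum $\sum_{i,j}\fnt{Q}_{ij}\LRp{\psi(\fnt{u}_i)-\psi(\fnt{u}_j)}$ then telescopes: the $\psi(\fnt{u}_i)$ part vanishes by $\fnt{Q}\fnt{1}=\fnt{0}$, while the $\psi(\fnt{u}_j)$ part becomes $-\fnt{1}^T\fnt{Q}\psi(\fnt{u})=-\fnt{1}^T\fnt{B}\psi(\fnt{u})$ after using $\fnt{1}^T\fnt{Q}=\fnt{1}^T(\fnt{B}-\fnt{Q}^T)=\fnt{1}^T\fnt{B}$, which relies on $\fnt{Q}\fnt{1}=\fnt{0}$ together with the SBP property $\fnt{Q}+\fnt{Q}^T=\fnt{B}$.

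Finally I would assemble the pieces: since $\fnt{B}$ is diagonal, the interface term rewrites nodally as $\fnt{v}^T\fnt{B}\bm{f}^*(\fnt{u},\fnt{u}^+)=\fnt{1}^T\fnt{B}\LRp{\fnt{v}^T\bm{f}^*(\fnt{u},\fnt{u}^+)}$, and substituting the three evaluated terms into (\ref{eq:global_DG_entropy_dis}) and rearranging gives exactly the claimed local entropy balance. The one genuinely new difficulty is the volume term: the standard symmetric argument exploits $\bm{f}_{EC}(\fnt{u}_i,\fnt{u}_j)=\bm{f}_{EC}(\fnt{u}_j,\fnt{u}_i)$, which is unavailable here, so the proof must instead route through the skew part $\fnt{Q}-\fnt{Q}^T$ precisely so that the antisymmetric flux combination appears and (\ref{eq:nonsym_cond_LR}) can be invoked. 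Confirming that this antisymmetric grouping is compatible with the $\fnt{Q}-\fnt{Q}^T$ appearing in the formulation is the crux of the argument; the rest is bookkeeping with the SBP identities.
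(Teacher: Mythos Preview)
Your proposal is correct and follows essentially the same approach as the paper's own derivation: contracting with the entropy variables, reducing the time term via the chain rule and diagonal mass matrix, relabeling indices in the $\fnt{Q}-\fnt{Q}^T$ volume term to expose the non-symmetric Tadmor combination (\ref{eq:nonsym_cond_LR}), and then collapsing the resulting $\psi$-sum to a boundary term using $\fnt{Q}\fnt{1}=\fnt{0}$ and the SBP identity. The only addition is that you spell out the diagonal-$\fnt{B}$ rewriting of the interface term explicitly, which the paper leaves implicit.
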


We now treat interface terms, which involve the outward normals as encoded by the matrix $\fnt{B}$. Suppose that $\fnt{f}^*$ is entropy stable, and consider a shared face between two elements $D^k$ and $D^{k,+}$. The outward normal for $D^{k,+}$ is the negation of the outward normal for $D^k$, so summing face contributions in Theorem~\ref{thm:ec_local} and using (\ref{eq:nonsym_cond_LR_ES}) yields
\[
\fnt{v}^T\bm{f}^*(\fnt{u}, \fnt{u}^+) - \LRp{\fnt{v}^+}^T\bm{f}^*(\fnt{u}^+, \fnt{u}) \geq \psi(\fnt{u}) - \psi(\fnt{u}^+).
\]
Summing up interface contributions over all elements $D^k$, we observe that all instances of $\psi(\fnt{u})$ cancel with each other. The only terms which remain after summation over all elements are the terms corresponding to the global domain boundaries. We summarize this in the following theorem:
\begin{theorem}
\label{thm:ec_global}
If the entropy conservative flux $\bm{f}_{EC}$ is entropy conservative in the sense of (\ref{eq:nonsym_cond_LR}) and $\bm{f}^*$ is entropy stable in the sense of (\ref{eq:nonsym_cond_LR_ES}), (\ref{eq:global_DG_formulation_dis}) satisfies the following statement of global entropy dissipation:
\begin{equation}
\LRp{\sum_k \fnt{1}^T \fnt{M}_k \td{S(\fnt{u})}{t}} + \LRp{\fnt{v}_R^T\bm{f}^*(\fnt{u}_R,\fnt{u}_R^+) - \psi(\fnt{u}_R)} - \LRp{\fnt{v}_L^T\bm{f}^*(\fnt{u}_L,\fnt{u}_L^+) - \psi(\fnt{u}_L)} \leq 0,
\label{eq:global_entropy_dissip_noBCs}
\end{equation}
where $\fnt{v}_L, \fnt{u}_L$ and $\fnt{v}_R, \fnt{u}_R$ denote solution states at the left and right endpoint of the domain $\Omega$, and $\fnt{u}_{L}^+, \fnt{u}_R^+$ denote exterior states used to impose boundary conditions. 
\end{theorem}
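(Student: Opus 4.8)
The plan is to obtain the global statement by summing the local entropy conservation identity of Theorem~\ref{thm:ec_local} over every element $D^k$ and then controlling the resulting interface terms. First I would write, for each element,
\[
\fnt{1}^T \fnt{M}_k \td{S(\fnt{u})}{t} + \fnt{1}^T\fnt{B}\LRp{\fnt{v}^T\bm{f}^*(\fnt{u},\fnt{u}^+) - \psi(\fnt{u})} = 0,
\]
and sum over $k$. Since the mass-matrix term is already in the desired form once summed, the entire argument reduces to showing that the accumulated boundary contributions $\sum_k \fnt{1}^T\fnt{B}\LRp{\fnt{v}^T\bm{f}^*(\fnt{u},\fnt{u}^+) - \psi(\fnt{u})}$ are bounded below by the global boundary terms appearing in (\ref{eq:global_entropy_dissip_noBCs}).

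Next I would unpack the boundary operator. In 1D, $\fnt{1}^T\fnt{B}\bm{\phi}$ evaluates to the difference of the contributions at the right and left endpoints of $D^k$, weighted by the outward normals $\pm 1$. Each interior node is shared by two neighboring elements $D^k$ and $D^{k,+}$, and I would collect the two contributions generated there. Because the outward normal of $D^{k,+}$ is the negation of that of $D^k$, the paired contribution at a shared interface is exactly
\[
\LRp{\fnt{v}^T\bm{f}^*(\fnt{u}, \fnt{u}^+) - \psi(\fnt{u})} - \LRp{(\fnt{v}^+)^T\bm{f}^*(\fnt{u}^+, \fnt{u}) - \psi(\fnt{u}^+)}.
\]
Invoking the entropy dissipation condition (\ref{eq:nonsym_cond_LR_ES}) on the flux $\bm{f}^*$, namely $\fnt{v}^T\bm{f}^*(\fnt{u},\fnt{u}^+) - (\fnt{v}^+)^T\bm{f}^*(\fnt{u}^+,\fnt{u}) \geq \psi(\fnt{u}) - \psi(\fnt{u}^+)$, shows that this paired contribution is nonnegative: the $\psi$ terms telescope and cancel, leaving an interface contribution bounded below by $0$.

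Summing these nonnegative interior interface contributions, every shared interface drops out up to a nonnegative remainder, and the only surviving terms are those at the two global endpoints of $\Omega$, where the exterior states $\fnt{u}_L^+, \fnt{u}_R^+$ encode boundary data. Moving the nonnegative interface sum to the right-hand side of the summed identity then yields the claimed inequality (\ref{eq:global_entropy_dissip_noBCs}). I expect the main obstacle to be purely bookkeeping rather than conceptual: one must track the sign conventions carried by $\fnt{B}$ and the orientation reversal of normals across each interface with enough care that the $\psi$ cancellation and the direction of the inequality come out correctly. Given Theorem~\ref{thm:ec_local} and condition (\ref{eq:nonsym_cond_LR_ES}), no further structural input about the specific quasi-1D fluxes is needed, so the argument is independent of whether one treats the shallow water or the compressible Euler system.
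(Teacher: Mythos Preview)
Your proposal is correct and follows essentially the same route as the paper: sum the local entropy identity of Theorem~\ref{thm:ec_local} over all elements, pair the two contributions at each shared interface using the sign flip of the outward normal, apply the entropy dissipation condition (\ref{eq:nonsym_cond_LR_ES}) so that the $\psi$ terms cancel and leave a nonnegative interface remainder, and retain only the two global boundary terms. The paper's argument is terser but structurally identical.
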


Adding an entropy dissipative interface penalization or entropy dissipative physical diffusion term produces an entropy stable discretization \cite{chan2022efficient}. For example, an entropy dissipative scheme can be constructed by adding a local Lax-Friedrichs penalty to the entropy conservative flux at interfaces
\begin{equation}
\bm{f}^*(\fnt{u}, \fnt{u}^+) = \bm{f}_{EC}(\fnt{u}, \fnt{u}^+) - \frac{\lambda}{2}\jump{\fnt{u}}n, \qquad \lambda > 0
\label{eq:ec_plus_LxF}
\end{equation}
where we have introduced the jump $\jump{\fnt{u}} = \fnt{u}^+ - \fnt{u}$ and the outward normal $n \pm 1$. Similarly, one can also use a local Lax-Friedrichs flux, which is entropy stable if $\lambda$ is sufficiently large \cite{warnecke2004solution, kroner2008minimum}.

\begin{remark}
The non-symmetric condition (\ref{eq:nonsym_cond_LR}) results in a simpler semi-discrete entropy analysis for the shallow water equations. For example, \cite{wintermeyer2017entropy, wu2021high} treat the bathymetry terms separately from other symmetric flux terms, and \cite{fjordholm2011well} modifies the symmetric flux condition to explicitly account for discontinuities in bathymetry. In contrast, the condition (\ref{eq:nonsym_cond_LR}) allows the bathymetry terms to be absorbed naturally into the definition of the flux.
\end{remark}

\subsubsection{Wall boundary conditions}

Finally, we discuss boundary conditions for which we can prove global entropy dissipation. For periodic boundary conditions, $\fnt{u}_L^+ = \fnt{u}_R$ and $\fnt{u}_R^+ = \fnt{u}_L$ in (\ref{eq:global_entropy_dissip_noBCs}), and Theorem~\ref{thm:ec_global} and (\ref{eq:nonsym_cond_LR_ES}) imply a global statement of entropy dissipation:
\[
\sum_k \fnt{1}^T \fnt{M}_k \td{S(\fnt{u})}{t} \leq 0.
\]

From here onwards, we focus on reflective wall boundary conditions (e.g.,  the normal velocity is zero). For the analysis of boundary conditions, we now restrict ourselves to the entropy stable flux (\ref{eq:ec_plus_LxF}) constructed using a local Lax-Friedrichs penalization. For the quasi-1D shallow water equations, we choose the exterior state $\fnt{u}^+$ appropriately. For the quasi-1D shallow water equations, we set $\fnt{u}^+ = \LRc{ah, -ahu}$. Under these ``mirror state'' boundary conditions, $\avg{ahu} = 0$, so $\fnt{v}^T\bm{f}^*(\fnt{u}, \fnt{u}^+) - \psi(\fnt{u})$ reduces to
\[
\fnt{v}^T\bm{f}^*(\bm{u}, \bm{u}^+) - \psi(\fnt{u}) = \frac{g}{2}a h(h + b) u + \lambda ahu^2 - \frac{1}{2} gah(h+b)u = \lambda ahu^2.
\]
 Theorem~\ref{thm:ec_global} then implies entropy stability if $\lambda ahu^2 > 0$, which holds if $ah > 0$ and $\lambda > 0$. 

For the quasi-1D compressible Euler equations we impose reflective wall conditions through the mirror states $\fnt{u}^+ = \LRc{a\rho, -a\rho u, aE}$. Under this assumption, $p = p^+$ and the boundary contributions in (\ref{eq:global_entropy_dissip_noBCs}) reduce to
\[
\fnt{v}^T\bm{f}^*(\bm{u}, \bm{u}^+) - \psi(\fnt{u}) = \begin{bmatrix}v_1\\ v_2 \\ v_3 \end{bmatrix}^T \LRp{\begin{bmatrix}0 \\ a p \\ 0\end{bmatrix} - 
\frac{\lambda}{2}\begin{bmatrix} 0 \\ -2 a \rho u \\ 0 \end{bmatrix}} - a\rho u,
\]
where we have used that $\prodmean{p \cdot au} = \frac{1}{2}\LRp{p a^+ u^+ + p^+ a u} = \frac{1}{2}\LRp{-p a u + p a u} = 0$. Since $v_2 = \frac{\rho u}{p}$, this expression reduces to 
\[
\fnt{v}^T\bm{f}^*(\bm{u}, \bm{u}^+) - \psi(\fnt{u}) = \lambda a \rho u \frac{\rho u}{p} =   \lambda a \LRp{\rho u}^2 \frac{1}{p},
\]
Theorem~\ref{thm:ec_global} implies entropy stability if $\lambda a \LRp{\rho u}^2 \frac{1}{p}$ is non-negative, which holds if $a > 0$ and $p > 0$. 


\subsection{Conservation}

Because quasi-1D systems are not in conservation form, the usual proofs of conservation do not hold. However, we can still show conservation of mass for both quasi-1D shallow water and compressible Euler, and we can show conservation of energy for the quasi-1D compressible Euler equations. Moreover, the rate of change of the mean momentum mimics the continuous case. 

Semi-discrete conservation is derived by \rnote{multiplying by $\fnt{1}^T$} and summing over all elements $D^k$
\[
\sum_k \fnt{1}^T\fnt{M}_k\td{\fnt{u}}{t} + \fnt{1}^T\LRp{\LRp{\fnt{Q}-\fnt{Q}^T} \circ \fnt{F}}\fnt{1} + \fnt{1}^T\fnt{B}\bm{f}^*(\fnt{u}, \fnt{u}^+) = 0.
\]
Let $\bm{f}_{EC}(\bm{u}_L, \bm{u}_R) = \bm{f}_{\rm sym}(\bm{u}_L, \bm{u}_R) + \bm{f}_{\rm nonsym}(\bm{u}_L, \bm{u}_R)$, where $\bm{f}_{\rm sym}(\bm{u}_L, \bm{u}_R)$ denotes the symmetric part of the flux and $\bm{f}_{\rm nonsym}(\bm{u}_L, \bm{u}_R)$ denotes the non-symmetric part. Then, $\fnt{F} = \fnt{F}_{\rm sym} + \fnt{F}_{\rm nonsym}$, where $\fnt{F}_{\rm sym}, \fnt{F}_{\rm nonsym}$ denote flux matrices constructed $\bm{f}_{\rm sym}$ and $\bm{f}_{\rm nonsym}$, respectively. Then, because $\LRp{\fnt{Q}-\fnt{Q}^T} \circ \fnt{F}_{\rm sym}$ is a skew-symmetric matrix, we can simplify the conservation expression to
\begin{align*}
\fnt{1}^T\LRp{\LRp{\fnt{Q}-\fnt{Q}^T} \circ \fnt{F}}\fnt{1} &= \fnt{1}^T\LRp{\LRp{\fnt{Q}-\fnt{Q}^T} \circ \fnt{F}_{\rm nonsym}}\fnt{1} \\
&= \fnt{1}^T\LRp{2\fnt{Q} \circ \fnt{F}_{\rm nonsym}}\fnt{1} - \fnt{1}^T\LRp{\fnt{B} \circ \fnt{F}_{\rm nonsym}}\fnt{1} \\
&= \fnt{1}^T\LRp{2\fnt{Q} \circ \fnt{F}_{\rm nonsym}}\fnt{1} - \fnt{1}^T\fnt{B} \bm{f}_{\rm nonsym}(\fnt{u},\fnt{u}). 
\end{align*}
where we have used the SBP property $\fnt{Q} = \fnt{B} - \fnt{Q}^T$ and the fact that $\fnt{B}$ is diagonal in the last two steps. Thus, we have that
\begin{equation}
\fnt{1}^T\fnt{M}_k\td{\fnt{u}}{t} + \fnt{1}^T\LRp{2\fnt{Q} \circ \fnt{F}_{\rm nonsym}}\fnt{1} + \fnt{1}^T\fnt{B}\LRp{\bm{f}^*(\fnt{u}, \fnt{u}^+) - \bm{f}_{\rm nonsym}(\fnt{u}, \fnt{u})} = 0.
\label{eq:conservation}
\end{equation}
Because the entropy conservative fluxes (\ref{eq:SWE_quasi_1D_flux}) and (\ref{eq:Euler_quasi_1D_flux}) are symmetric for the mass equation and (for the quasi-1D compressible Euler case) the energy equation, the components of $\bm{f}_{\rm nonsym}(\fnt{u}, \fnt{u})$ corresponding to the mass and (if applicable) the energy equations are zero. This implies the usual conservation condition (e.g., (77) in \cite{chan2018discretely}) for mass and energy. 

For the momentum equations, we seek to mimic the continuous conservation condition. Integrating the momentum equation of the quasi-1D shallow water system (\ref{eq:SWE_quasi_1D}) yields
\[
\int_{\Omega} \pd{ah u}{t} + \int_{\Omega} gah \pd{}{x}\LRp{h+b} + \int_{\partial \Omega} ahu^2 n = 0
\]
where $n = \pm 1$ denotes the outward normal on the domain boundary $\partial \Omega$. We will show that the discrete statement of conservation (\ref{eq:conservation}) mimics these conservation conditions. We begin by simplifying the volume term $\fnt{1}^T\LRp{2\fnt{Q} \circ \fnt{F}_{\rm nonsym}}\fnt{1}$. Note that for the quasi-1D shallow water equations, the non-symmetric part of the flux corresponds to $\frac{1}{2}ga_L h_L(h_R + b_R)$, such that
\[
\fnt{1}^T\LRp{2\fnt{Q} \circ \fnt{F}_{\rm nonsym}}\fnt{1} = \fnt{1}^T g\fnt{a} \circ \fnt{h} \circ \LRp{\fnt{Q} \LRp{\fnt{h} + \fnt{b}}} \approx \int_{D^k} g ah \pd{}{x}\LRp{h+b}.
\]
For the quasi-1D compressible Euler equations (\ref{eq:Euler_q1D}), integrating the momentum equation yields
\[
\int_{\Omega} \pd{a\rho u}{t} + \int_{\Omega} a\pd{p}{x} + \int_{\partial \Omega} a\rho u^2 n = 0.
\]
Similarly, the non-symmetric part corresponds to $a_L \avg{p}$, such that
\[
\fnt{1}^T\LRp{2\fnt{Q} \circ \fnt{F}_{\rm nonsym}}\fnt{1} = \fnt{1}^T \fnt{a} \circ \LRp{\fnt{Q} \fnt{p}} \approx \int_{D^k} \bnote{a \pd{p}{x}}.
\]
Finally, we note that 
\begin{equation}
\fnt{1}^T\fnt{B}\LRp{\bm{f}^*(\fnt{u}, \fnt{u}^+) - \bm{f}_{\rm nonsym}(\fnt{u}, \fnt{u})}
\label{eq:cons_boundary}
\end{equation}
is a consistent approximation to the boundary integral of conservative fluxes in each quasi-1D system. For the quasi-1D shallow water equations, (\ref{eq:cons_boundary}) corresponds to boundary contributions of the form 
\begin{equation}
\bm{f}^*(\fnt{u}, \fnt{u}^+) - \bm{f}_{\rm nonsym}(\fnt{u}, \fnt{u}) = \begin{bmatrix} 0\\ \avg{ahu}\avg{u} + \frac{1}{2}g a_Lh_L \jump{h + b} \end{bmatrix} - \frac{\lambda}{2} \jump{\bm{u}}n, 
\label{eq:swe_interface_flux}
\end{equation}
such that $\fnt{1}^T\fnt{B}\LRp{\bm{f}^*(\fnt{u}, \fnt{u}^+) - \bm{f}_{\rm nonsym}(\fnt{u}, \fnt{u})} \approx \int_{\partial \Omega} ahu^2$. 
For the quasi-1D compressible Euler equations, (\ref{eq:cons_boundary}) corresponds to boundary contributions of the form $\avg{\rho}_{\log}\avg{au}\avg{u} + a_L \frac{1}{2}\jump{p} \approx a\rho u^2$. Note that for both sets of equations, the presence of entropy dissipative jump penalization terms in $\bm{f}^*$ do not negatively impact consistency.

\subsection{High order accuracy}

It was shown in \cite{crean2018entropy} that if $\fnt{M}^{-1}\fnt{Q}$ is a high order accurate nodal differentiation operator, that $\fnt{M}^{-1}\LRp{\fnt{Q}\circ\fnt{F}}\fnt{1}$ yields a high order accurate approximation to $\pd{\bm{f}(\bm{u})}{x}$. Unfortunately, the proof of high order accuracy relies on the symmetry of the flux $\bm{f}_{EC}$, and does not hold for a non-symmetric flux. However, the structure of the fluxes $\bm{f}_{EC}$ for the quasi-1D shallow water and compressible Euler equations allows for a straightforward proof of high order consistency. 
\begin{theorem}
\rnote{Assume that $a$ and $h$ (for the quasi-1D shallow water equations) or $\rho, p$ (for the quasi-1D compressible Euler equations) are sufficiently regular and uniformly bounded away from zero. Then,} (\ref{eq:global_DG_formulation_dis}) is an \rnote{$O(h^N)$} accurate approximation of (\ref{eq:SWE_quasi_1D}) and 
(\ref{eq:Euler_q1D}) under periodic or wall boundary conditions.   
\end{theorem}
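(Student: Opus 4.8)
The plan is to reuse the additive decomposition $\bm{f}_{EC} = \bm{f}_{\rm sym} + \bm{f}_{\rm nonsym}$ from the conservation analysis, writing $\fnt{F} = \fnt{F}_{\rm sym} + \fnt{F}_{\rm nonsym}$, and to bound the truncation error of the two pieces by separate arguments. For the symmetric part I would appeal directly to \cite{crean2018entropy}: each symmetric two-point flux is consistent (one checks that $\bm{f}_{\rm sym}(\bm{u},\bm{u})$ equals the conservative flux of (\ref{eq:SWE_quasi_1D}) or (\ref{eq:Euler_q1D}), e.g. $\avg{ahu}\avg{u} \to ahu^2$ and $\avg{\rho}_{\log}\avg{au}\avg{u} \to a\rho u^2$), so that since $\fnt{M}^{-1}\fnt{Q}$ is an order-$N$ accurate nodal differentiation operator, $\fnt{M}^{-1}\LRp{2\LRp{\fnt{Q}\circ\fnt{F}_{\rm sym}}\fnt{1}}$ is an $O(h^N)$ approximation of $\pd{}{x}\bm{f}_{\rm sym}(\bm{u})$, namely the mass flux, the convective momentum flux, and (for Euler) the energy flux. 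The logarithmic means in the Euler fluxes are smooth functions of their arguments precisely because $\rho, p$ are bounded away from zero, which is what lets the smoothness hypotheses of \cite{crean2018entropy} apply.

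The genuinely new content is the non-symmetric part, for which the proof of \cite{crean2018entropy} breaks down. Here I would exploit the fact that $\bm{f}_{\rm nonsym}$ depends on the node $i$ only through a scalar coefficient, so the Hadamard product collapses to a single application of $\fnt{Q}$. For the shallow water momentum equation, $\bm{f}_{\rm nonsym}(\fnt{u}_i,\fnt{u}_j) = \frac{g}{2}a_i h_i\LRp{h_j+b_j}$ gives
\[
\LRs{2\LRp{\fnt{Q}\circ\fnt{F}_{\rm nonsym}}\fnt{1}}_i = g\, a_i h_i \LRs{\fnt{Q}\LRp{\fnt{h}+\fnt{b}}}_i,
\]
and since $\fnt{M}$ is diagonal the coefficient passes through $\fnt{M}^{-1}$, yielding $g\, a_i h_i \LRs{\fnt{M}^{-1}\fnt{Q}\LRp{\fnt{h}+\fnt{b}}}_i$, an $O(h^N)$ approximation of $gah\pd{}{x}\LRp{h+b}$. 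The Euler case is analogous: because $\bm{f}_{\rm nonsym}(\fnt{u}_i,\fnt{u}_j) = a_i\avg{p}$ and $\fnt{Q}\fnt{1} = \fnt{0}$ annihilates the $p_i$ piece, one obtains $\LRs{2\LRp{\fnt{Q}\circ\fnt{F}_{\rm nonsym}}\fnt{1}}_i = a_i\LRs{\fnt{Q}\fnt{p}}_i$ and hence $a_i\LRs{\fnt{M}^{-1}\fnt{Q}\fnt{p}}_i \approx a\pd{p}{x}$. Adding the symmetric and non-symmetric contributions reproduces exactly the spatial operators of (\ref{eq:SWE_quasi_1D}) and (\ref{eq:Euler_q1D}) to order $N$, with the mass equation (and the Euler energy equation) requiring only the symmetric argument.

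It then remains to dispatch the boundary and interface terms. In the strong form (\ref{eq:global_DG_formulation_strong}) the subtracted consistency term $\bm{f}_{EC}(\fnt{u},\fnt{u})$ converts the volume operator into the standard SBP approximation of the derivative, and its non-symmetric components $\bm{f}_{\rm nonsym}(\fnt{u},\fnt{u})$ (equal to $\frac{g}{2}ah(h+b)$ and $ap$) are consistent, exactly as used in the conservation analysis around (\ref{eq:cons_boundary}). For a smooth exact solution under periodic or wall boundary conditions the interface jumps vanish, so the local Lax--Friedrichs penalty $-\frac{\lambda}{2}\jump{\fnt{u}}n$ in $\bm{f}^*$ contributes nothing to the truncation error, and the remaining interface terms form a consistent, order-$N$ accurate approximation of the conservative boundary flux.

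I expect the main obstacle to live entirely in the non-symmetric part, and specifically in making the ``frozen coefficient'' step rigorous: one must verify that pulling $a_i h_i$ (or $a_i$) out of the product and through $\fnt{M}^{-1}$ produces an $O(h^N)$ truncation error for $gah\pd{}{x}(h+b)$ and $a\pd{p}{x}$, rather than degrading the order. This is where the hypotheses are essential --- boundedness away from zero guarantees the coefficients $a, h, \rho, p$ and the logarithmic means are smooth, and sufficient regularity guarantees that the polynomial approximation estimates underlying the accuracy of $\fnt{M}^{-1}\fnt{Q}$ survive multiplication by these smooth coefficients.
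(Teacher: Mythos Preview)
Your proposal is correct and follows essentially the same route as the paper: split $\bm{f}_{EC}$ into symmetric and non-symmetric parts, invoke \cite{crean2018entropy} for the symmetric (conservative) terms, and for the non-symmetric terms collapse $2(\fnt{Q}\circ\fnt{F}_{\rm nonsym})\fnt{1}$ to $g\fnt{a}\circ\fnt{h}\circ\fnt{D}(\fnt{h}+\fnt{b})$ and $\fnt{a}\circ\fnt{D}\fnt{p}$ (using $\fnt{Q}\fnt{1}=\fnt{0}$ in the Euler case), then use the strong form to handle the interface correction. The paper phrases the non-symmetric step through the interpolation operator $I_N$ (writing e.g.\ $I_N\!\left(gah\,\partial_x I_N(h+b)\right)$) and closes the boundary argument by noting that uniform boundedness makes $\bm{f}_{EC}$ uniformly continuous so that $\|\bm{f}_{EC}(\fnt{u},\fnt{u}^+)-\bm{f}_{EC}(\fnt{u},\fnt{u})\|$ is controlled by $\|\fnt{u}-\fnt{u}^+\|$, but these are cosmetic differences from your ``frozen coefficient'' and vanishing-jump reasoning.
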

\begin{proof}
We use the strong form of the discretization (\ref{eq:global_DG_formulation_strong}) to show $O(h^N)$ accuracy. First, we note that the symmetric terms in the entropy conservative flux $\bm{f}_{EC}$ are consistent numerical fluxes for conservative terms in (\ref{eq:SWE_quasi_1D}) and 
(\ref{eq:Euler_q1D}). Since these flux terms are consistent and symmetric, the proof of \rnote{$O(h^N)$} accuracy of the flux differencing approximation in \cite{crean2018entropy} holds for \rnote{these conservative terms} (see also \cite{fjordholm2012arbitrarily, fisher2013discretely}). All that remains is to show that the non-conservative terms in $\bm{f}_{EC}$ induce a high order accurate approximations to the non-conservative terms in (\ref{eq:SWE_quasi_1D}) and (\ref{eq:Euler_q1D}). 

\rnote{Let $I_N$ denote the degree $N$ polynomial interpolation operator, and} let $\fnt{F}_{\rm nc}$ denote the flux matrix corresponding to the scalar non-conservative terms in either the quasi-1D shallow water or compressible Euler equations, and let $\fnt{D} = \fnt{M}^{-1}\fnt{Q}$ denotes the SBP differentiation matrix. For the quasi-1D shallow water equations, the non-conservative terms in (\ref{eq:SWE_quasi_1D}) are $gah\pd{}{x}\LRp{h+b}$. The corresponding non-conservative flux terms are $\frac{1}{2}ga_Lh_L(h_R+b_R)$, and following \cite{gassner2016split}, the flux differencing approximation reduces to 
\begin{align*}
\LRp{\fnt{M}^{-1}\LRp{2\fnt{Q} \circ \fnt{F}_{\rm nc}}\fnt{1}}_i = \sum_j \fnt{M}_{ii}^{-1}\fnt{Q}_{ij} g\fnt{a}_i \fnt{h}_i(\fnt{h}_j+\fnt{b}_j) = g \fnt{a}_i \fnt{h}_i \sum_j \fnt{D}_{ij}  (\fnt{h}_j+\fnt{b}_j).
\end{align*}
This term corresponds to $g \fnt{a} \circ \fnt{h} \circ \LRp{\fnt{D} \LRp{\fnt{h} + \fnt{b}}}$, which \rnote{in turn corresponds to $I_N\LRp{gah\pd{}{x}\LRp{I_N(h+b)}}$. For sufficiently regular $a, h, b$, this interpolant is an $O(h^N)$ approximation to $gah\pd{}{x}\LRp{h+b}$.}

For the quasi-1D compressible Euler equations, the non-conservative term is $a(x)\pd{p}{x}$ and the corresponding flux terms in $\bm{f}_{EC}$ are $a_L \avg{p} = a_L \frac{1}{2}(p_L + p_R)$. The flux differencing contribution for this non-symmetric term is
\[
\LRp{\fnt{M}^{-1}\LRp{2\fnt{Q} \circ \fnt{F}_{\rm nc}}\fnt{1}}_i = \sum_j \fnt{M}_{ii}^{-1} \fnt{Q}_{ij} \fnt{a}_i (\fnt{p}_i+\fnt{p}_j) = \fnt{a}_i \sum_j \fnt{D}_{ij} \fnt{p}_j, 
\]
where we have used that $\sum_j \fnt{Q}_{ij} = 0$ for any first order accurate differentiation matrix \cite{gassner2016split}. \rnote{This corresponds to $\fnt{a}\circ \LRp{\fnt{D}\fnt{p}}$, which in turn corresponds to $I_N\LRp{a\pd{I_N(p)}{x}}$. For sufficiently regular $a, p$, this interpolant is an $O(h^{N})$ approximation to $a(x)\pd{p}{x}$.

Finally, uniform boundedness away from zero implies that $a \geq a_0 > 0$ and $h \geq h_0 > 0$ (for the quasi-1D shallow water equations) or $\rho, p \geq \rho_0, p_0 > 0$ (for the quasi-1D compressible Euler equations). From the expressions (\ref{eq:SWE_quasi_1D_flux}), (\ref{eq:Euler_quasi_1D_flux}), uniform boundedness away from zero implies that $\bm{f}_{EC}$ is a uniformly continuous function such that $\nor{\bm{f}_{EC}(\fnt{u}, \fnt{u}^+) - \bm{f}_{EC}(\fnt{u}, \fnt{u})}$ is proportional to $\nor{\fnt{u} - \fnt{u}^+}$.}
\end{proof}

\subsection{Well-balancedness for the quasi-1D shallow water equations}

Next, we show that our numerical scheme (\ref{eq:global_DG_formulation_dis}) is well-balanced for the quasi-1D shallow water equations. The ``lake-at-rest'' well-balanced property preserves steady states where $u=0$ and $h + b=c$, where $c$ is some constant. 

We first analyze the case of continuous bathymetry \bnote{under the} flux (\ref{eq:ec_plus_LxF}). Since $h+b$ is constant, continuity of $b$ implies continuity of $h$, such that the Lax-Friedrichs penalization terms vanish in (\ref{eq:swe_interface_flux}). Since $\fnt{u} = \fnt{0}$, the entropy conservative part of (\ref{eq:swe_interface_flux}) vanishes as well. We now show that the volume terms also vanish. Since the semi-discrete solution satisfies $\fnt{ahu} = \fnt{0}$, we immediately have that the flux for the $ah$ equation vanishes and thus $\pd{}{t}ah = 0$. 

It remains to show that $(\fnt{Q}\circ \fnt{F}_{ahu})\fnt{1} = \fnt{0}$, which would imply $\pd{}{t}ahu = 0$. Let $\fnt{F}_{ahu}$ denote the flux matrix constructed from the flux for the $ahu$ equation. Since $\avg{\fnt{ahu}}\avg{\fnt{u}} = 0$, we have that
\begin{align}
\nonumber
\LRp{(\fnt{Q}\circ \fnt{F}_{ahu})\fnt{1}})_i =& \sum_j \fnt{Q}_{ij}\LRp{\frac{1}{2}g\fnt{a}_i\fnt{h}_i\fnt{h}_j+\frac{1}{2}g\fnt{a}_i\fnt{h}_i\fnt{b}_j} \\
=&\frac{g\fnt{a}_i\fnt{h}_i}{2}\sum_j \fnt{Q}_{ij}(\fnt{h}_j+\fnt{b}_j) =\frac{g\fnt{a}_i\fnt{h}_i}{2}\sum_j \fnt{Q}_{ij} c = 0.
\end{align}
This implies that our numerical scheme is well-balanced for the quasi-1D shallow water equations. 

For discontinuous bathymetry profiles, $\jump{h}$ no longer vanishes and the scheme (\ref{eq:global_DG_formulation_dis}) with local Lax-Friedrichs flux penalization (\ref{eq:ec_plus_LxF}) is no longer well-balanced. However, as noted in \cite{fjordholm2011well, wintermeyer2017entropy}, if the flux penalization is defined in terms of the entropy variables, then because $\jump{v_1} = \jump{g(h + b) - \frac{1}{2}u^2} = 0$ and $\jump{v_2} = \jump{u} = 0$ for $h+b = c$ and $u = 0$, the interface term vanishes. Thus, our scheme  (\ref{eq:global_DG_formulation_dis}) is well-balanced for discontinuous bathymetry so long as the interface flux is of the form
\begin{equation}
\bm{f}^*(\bm{u}^+, \bm{u}) = \bm{f}_{EC}(\bm{u}^+, \bm{u}) - \bm{R}\jump{\bm{v}(\bm{u})} n
\label{eq:wb_lxf}
\end{equation}
where $\bm{v}(\bm{u})$ denote the entropy variables and $\bm{R}$ is a positive-definite matrix which is single-valued across the interface. In this work, we utilize the matrix
\[
\bm{R} = \frac{\lambda}{2} \LRl{\pd{\bm{u}}{\bm{v}}}_{\avg{\bm{u}}}, \qquad \pd{\bm{u}}{\bm{v}} =  \frac{1}{ah} \begin{bmatrix}
(g h + u^2)  &   -u\\
-u  		  &     1
\end{bmatrix}
\]
which is the Jacobian matrix of the transformation between conservative and entropy variables, evaluated using the average states at an interface. The flux penalty $\bm{R}\jump{\bm{v}(\bm{u})}$ then corresponds to a Lax-Friedrichs flux penalization expressed using entropy variables. We note that the dissipation matrices of \cite{fjordholm2011well, wintermeyer2017entropy} will also preserve the lake-at-rest steady state. 

\begin{remark}
There also exist well-balanced first and second order finite volume schemes for the quasi-1D compressible Euler equations \cite{kroner2005numerical, helluy2012well}; however, the high order schemes presented in this paper do not appear to preserve such steady states. 
\end{remark}

\section{Numerical experiments} 
\label{sec:num}

In this section, we verify the entropy conservation/stability and high order accuracy of the formulation (\ref{eq:global_DG_formulation_dis}) constructed using the entropy conservative fluxes for the shallow water equations (\ref{eq:SWE_quasi_1D_flux}) and the compressible Euler equations (\ref{eq:Euler_quasi_1D_flux}). We focus on high order discontinuous Galerkin spectral element method (DGSEM) discretizations, for which the SBP matrices in (\ref{eq:global_DG_formulation_dis}) are constructed over each element using $(N+1)$-point Lobatto quadrature points \cite{gassner2013skew}. 

For all convergence tests, we report the total $L^2$ error 
\[
\nor{\bm{u}}_{L^2}^2 = \sum_i^{N_{\rm vars}} \nor{\frac{\bm{u}_i}{a}}^2_{L^2}.
\]
where $N_{\rm vars}$ denotes the number of conservative variables in the system. We divide by $a(x)$ to recover the error in the ``standard'' conservative variables (e.g., $h, hu$ for shallow water and $\rho, \rho u, E$ for compressible Euler).

All numerical experiments are implemented using the Julia programming language \cite{bezanson2017julia}, the OrdinaryDiffEq.jl package \cite{rackauckas2017differentialequations}, and routines from the Trixi.jl package \cite{schlottkelakemper2021purely, ranocha2022adaptive, ranocha2021efficient}. Unless otherwise specified, we utilize the 4th order adaptive Runge-Kutta method implemented in \cite{rackauckas2017differentialequations}. The Julia codes used to generate the following numerical results are available at \url{https://github.com/raj-brown/quasi_1d_dgsem/}.

\subsection{Quasi-1D shallow water equations}

We begin by examining the accuracy of the proposed numerical methods for the quasi-1D shallow water equations with varying bathymetry and channel widths using the EC fluxes (\ref{eq:SWE_quasi_1D_flux}). We also check that the proposed schemes are well-balanced for spatially varying (including discontinuous) bathymetry and channel widths. 

\subsubsection{Convergence analysis}

We first examine convergence for the quasi-1D shallow water equations by comparing solutions \rnote{on uniformly refined grids} to a fine grid solution computed using $8000$ elements of degree $N=3$. We follow \cite{qian2018positivity} and utilize the following initial conditions:
\begin{align}
	\begin{aligned}
		a(x) &= \exp(\sin(2\pi x))\\
		h(x, 0) &= 3 + \exp(\cos(2\pi x))\\
		b(x) &= \sin^2(\pi x)\\
		u(x, 0) &= \frac{1}{ah}\sin(\cos(2\pi x))
	\end{aligned}
\end{align}

Table~\ref{table:swe_convergence} shows the computed $L^2$ errors for degree $N$ meshes of $\mathrm{K}$ elements. We observe that the rate of convergence appears to approach $O(h^{N+1})$ as the mesh is \rnote{uniformly} refined. Optimal rates of convergence are also observed when testing with manufactured solution in Table~\ref{table:swe_convergence_manufactured} using the following \bnote{initial condition}:
\begin{align}
    \bnote{h(x, t)} &\bnote{= 3 + \frac{1}{10}e^{\cos(2\pi x)}e^{-t}}\\
    \bnote{u(x, t)} &\bnote{= \sin(\cos(2 \pi x)) e^{-t}}
\end{align} 
\bnote{Source terms are computed using forward mode automatic differentiation via ForwardDiff.jl \cite{revels2016forward}.}

\begin{table}[!h]
\centering
	\begin{tabular}{|c|c|c|c|c|c|c|c|c|}
		\hline & \multicolumn{2}{|c|}{$N=1$} & \multicolumn{2}{c|}{$N=2$} & \multicolumn{2}{c|}{$N=3$} & \multicolumn{2}{c|}{$N=4$} \\
		\hline $\mathrm{K}$ & $L^2$ error & Rate & $L^2$ error & Rate & $L^2$ error & Rate & $L^2$ error & Rate  \\
		\hline 2 & $1.43 $ &-& $1.22$ &-& $7.05  \times 10^{-1} $ &-& $ 4.07  \times 10^{-1}$ &-  \\
		4 & $1.26 $ & 0.19&  $3.0 \times 10^{-1}$ & 2.05 &  $ 1.18  \times 10^{-1}$ & 2.61 & $3.28 \times 10^{-2}$  & 3.63 \\
		8 & $5.14\times 10^{-1}$ & 1.29 & $1.00 \times 10^{-1} $  & 1.56 & $1.48 \times 10^{-2}$  & 2.97& $1.89\times 10^{-3}$ & 4.12 \\
		16 & $2.01 \times 10^{-1} $ & 1.35 & $1.58 \times 10^{-2} $ & 2.67  & $6.90 \times 10^{-4}$ & 4.41 & $1.82\times 10^{-4}$ & 3.37 \\
		32 & $7.21  \times 10^{-2} $ & 1.48 & $2.53 \times 10^{-3}  $ & 2.64  & $7.88 \times 10^{-5}$ &  3.12& $6.98  \times 10^{-6}$  & 4.71\\
		\hline
	\end{tabular}
	\caption{ $L^2$ error and convergence rates for the quasi-1D shallow water equations with a fine-grid reference solution.}
	\label{table:swe_convergence}
\end{table}

\begin{table}
\centering
	\begin{tabular}{|c|c|c|c|c|c|c|}
		\hline & \multicolumn{2}{|c|}{$N=1$} & \multicolumn{2}{c|}{$N=2$} & \multicolumn{2}{c|}{$N=3$} \\
		\hline $\mathrm{K}$ & $L^2$ error & Rate & $L^2$ error & Rate & $L^2$ error & Rate \\
		\hline 16 & $4.80  \times 10^{-1}$ &-& $8.46 \times 10^{2}$ &-& $8.53  \times 10^{-3} $ &- \\
		32 & $1.30 \times 10^{-1} $ & 1.88&  $9.89  \times 10^{-3}$ & 3.10  &  $ 4.73  \times 10^{-4}$ & 4.17  \\
		64& $3.44 \times 10^{-2}$ & 1.92 & $1.23 \times 10^{-3} $  & 3.00  & $2.96 \times 10^{-5}$  & 4.00\\
		128 & $8.95 \times 10^{-3} $ & 1.94 & $1.54 \times 10^{-4} $ & 2.99 & $1.87 \times 10^{-6}$ & 3.99 \\
		256 & $2.28  \times 10^{-3} $ & 1.97& $1.93  \times 10^{-5}  $ & 3.00  & $1.17  \times 10^{-7}$ &  4.00 \\
		\hline
	\end{tabular}
	\begin{tabular}{|c|c|c|c|c|}
		\hline & \multicolumn{2}{c|}{$N=4$}  & \multicolumn{2}{c|}{$N=5$} \\
		\hline $\mathrm{K}$ & $L^2$ error & Rate & $L^2$ error & Rate \\
		\hline 16 & $ 1.00   \times 10^{-3}$ &- & $1.39  \times 10^{-4}$ & -  \\
		32 & $3.45 \times 10^{-5}$  & 4.86 &  $1.82 \times 10^{-6}$ & 6.25   \\
		64 & $1.09\times 10^{-6}$ & 4.98 & $2.84  \times 10^{-8}$ &  6.00   \\
		128 & $3.39  \times 10^{-8}$ & 5.01 & $4.48 \times 10^{-10}$  & 5.99 \\
		256 & $1.05 \times 10^{-9}$  & 5.01 & $7.04 \times 10^{-11}$ & 5.99 \\
		\hline
	\end{tabular}	
	\caption{ $L^2$ error and convergence rates for the quasi-1D shallow water equations with a manufactured solution.}
	\label{table:swe_convergence_manufactured}
\end{table}


\subsubsection{Verification of lake-at-rest well-balancedness}


We now consider a test of well-balancedness. For this experiment, we utilize the well-balanced Lax-Friedrichs penalization given by (\ref{eq:wb_lxf}). We consider both continuous and discontinuous channel widths and bottom topography in the domain $[0,1]$. The continuous bottom topography is given by
\begin{align}
b(x)= \begin{cases}\frac{1}{4}\LRp{1+\cos \LRp{10 \pi\LRp{x-\frac{1}{2}}}}, & \text { if } 0.4 \leq x \leq 0.6 \\ 0, & \text { otherwise }\end{cases}
\label{wb1}
\end{align}
The channel with continuous varying width $a(x)$ takes the form of
\begin{align}
a(x)= \begin{cases}1-\sigma_0\left(1+\cos \left(2 \pi \frac{x-\left(x_l+x_r\right) / 2}{x_r-x_l}\right)\right), & \text { if } x \in\left[x_l, x_r\right] \\ 1, & \text { otherwise }\end{cases}
\label{wb2}
\end{align}
where $x_l$ and $x_r$ are the left and right boundary of the contraction, and $1-2 a_0$ represents the minimum width of the channel at the point $\left(x_l+x_r\right) / 2$. In this example, we choose 
\[
x_l=0.25, \qquad x_r=0.75, \qquad a_0=0.2.
\]
The discontinuous channel width and bottom topography are given by
\[
b(x) = \begin{cases}
\frac{1}{2}  & x > \frac{1}{2}\\
0 & \text{ otherwise }
\end{cases}, \qquad
a(x)= \begin{cases}
1-\sigma_0\left(1+\cos \left(2 \pi \frac{x-\left(x_l-x_r\right) / 2}{x_r-x_l}\right)\right) & \text { if } x \in\left[x_l, x_r\right] \\ 
\frac{1}{2} & \text{ if } x > x_r\\
1, & \text { otherwise }\end{cases}
\]
For both continuous and discontinuous cases, the initial condition is the steady state lake-at-rest solution
$$
h+b=1, \qquad Q=a h u=0,
$$
and periodic boundary conditions are used. We discretize the domain using 200 uniform cells of degree $N=3$ and evolve the solution to final time $T_{\rm final}=1$ using the 4-stage 4th order Runge-Kutta method with sufficiently small time-step. Computed $L^1, L^2$, and $L^\infty$ errors are shown in Table~\ref{table:well_balanced}, and we observe that each computed error is close to machine precision for both continuous and discontinuous channel widths $a$ and bottom bathymetry $b$. 

\begin{table}
\centering
	\begin{tabular}{||c | c | c |c||} 
		\hline
		 Case & $L^1$ error & $L^2-$ error & $L^{\infty}$ error \\ [0.5ex] 
		\hline
		Continuous $b$ and $a$ & $9.19 \times 10^{-15}$ & $5.92 \times 10^{-11}$  & $2.01\times 10^{-13}$  \\
		\hline
		Discontinuous $b$ and $a$ & $1.46 \times 10^{-14}$ & $3.15 \times 10^{-14}$  & $ 1.65\times 10^{-16}$  \\
	  \hline 
	\end{tabular}
\caption{Computed errors for the well-balanced test}
\label{table:well_balanced}
\end{table}


\subsubsection{Converging-diverging channel}

We conclude with an experiment on steady transcritical flow in a converging-diverging channel \cite{garcia19921, vazquez1999improved, qian2018positivity}. The domain is $[0, 500]$, the bottom bathymetry is flat with $b=0$, and the converging-diverging channel geometry is given by (\ref{eq:swe_conv_div_channel})\footnote{The formula for the channel geometry follows \cite{vazquez1999improved}; the expression in \cite{qian2018positivity} appears to be slightly different.}
\begin{equation}
a(x) = \begin{cases}
5 - 0.7065 \LRp{1 + \cos\LRp{2\pi \frac{x-250}{300}}}, & x \in [100, 450]\\
5, & \text{otherwise}
\end{cases}
\label{eq:swe_conv_div_channel}
\end{equation}
The initial conditions are $h = 2, ahu = 20$, and boundary conditions are taken to be $ahu = 20$ at the inflow $x=0$ and $h = 1.85$ at the outflow $x = 500$. \rnote{The solutions are discretized using 200 and 400 uniform elements} of degree $N=2$. The water height $h$ and computed Froude number $|u| / c$ where $c = \sqrt{g h}$ are shown in Figure~\ref{fig:swe_conv_div}, along with reference solution values taken from \cite{vazquez1999improved}. \rnote{The solutions show some discrepancies at the shock, which may be due to the smoothing out of the shock profile by the slope limiter along the upstream direction in \cite{vazquez1999improved}.}

\begin{figure}[h]
\centering
\subfloat[Water height $h$ (200 elements)]{\includegraphics[width=.49\textwidth]{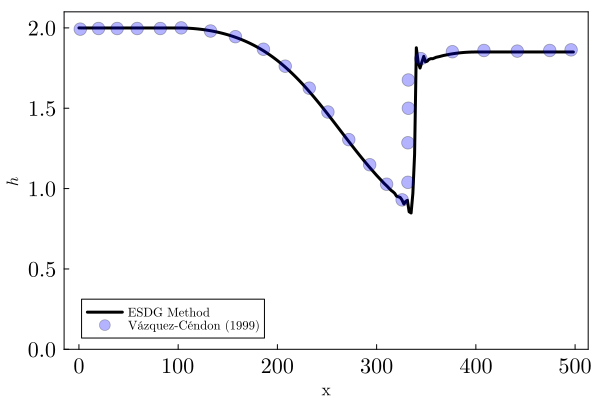}}
\subfloat[Froude number (200 elements)]{\includegraphics[width=.49\textwidth]{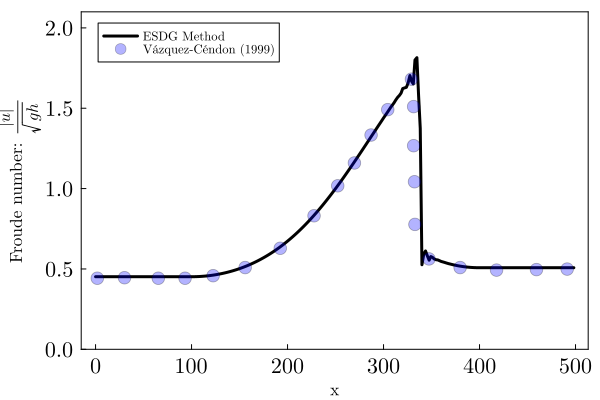}}\\
\subfloat[Water height $h$ (400 elements)]{\includegraphics[width=.49\textwidth]{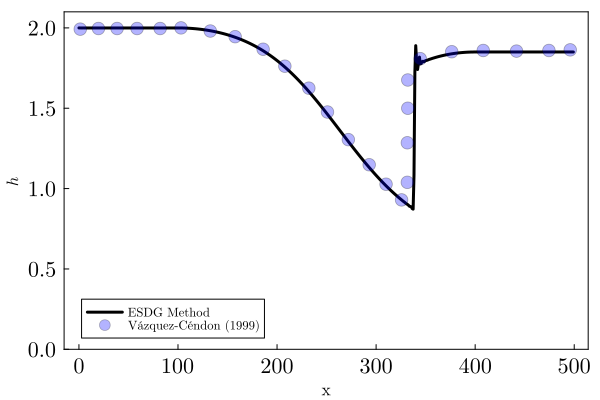}}
\subfloat[Froude number (400 elements)]{\includegraphics[width=.49\textwidth]{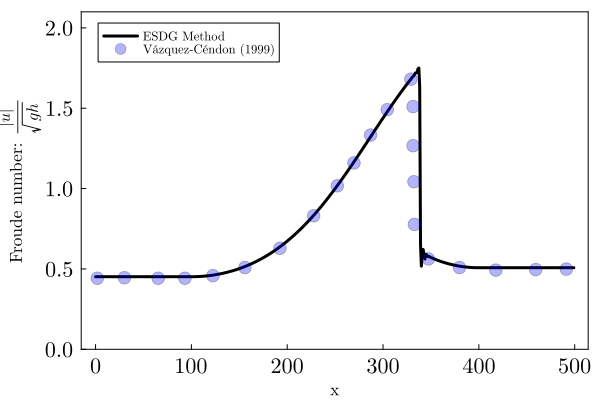}}
\caption{Water height and Froude number for the converging-diverging channel problem.}
\label{fig:swe_conv_div}
\end{figure}

\subsection{Quasi-1D compressible Euler equations}

In this section, we examine the behavior of high order entropy stable DGSEM schemes using the EC fluxes (\ref{eq:Euler_quasi_1D_flux}) for the compressible Euler equations. For all problems, $\gamma=1.4$. 

\subsubsection{Entropy conservation verification}

We first verify the entropy conservation of the proposed scheme by evolving a discontinuous initial condition up to final time $T_{\rm final} = 2$. The domain is $[-1,1]$ with periodic boundary conditions, and the interface flux is taken to be the entropy conservative flux (\ref{eq:Euler_quasi_1D_flux}). Together, this yields an entropy conservative high order scheme. The initial state is given by left and right data
\[
\LRp{\rho_L, u_L, p_L} = (3.4718, -2.5923, 5.7118), \qquad \LRp{\rho_R, u_R, p_R} = (2, -3, 2.639),
\]
which lie on the left/right of a discontinuity at $x = 0$ \bnote{(as well as a discontinuity across the boundaries since the domain is periodic}). The nozzle width is also taken to be a discontinuous function 
\[
a(x) = \begin{cases}
1 	& x < 0\\
1.5 	& x \geq 0
\end{cases}.
\]
Figure~\ref{fig:ec} shows the evolution of the entropy residual $\sum_k \fnt{v}^T\fnt{M}_k\td{\fnt{u}}{t}$ for a degree $N=3$ simulation on a mesh of $64$ elements. As expected, the residual remains between $O(10^{-12})$ and $O(10^{-14})$ for the duration of the simulation. 

\begin{figure}[h]
\centering
\includegraphics[width=.49\textwidth]{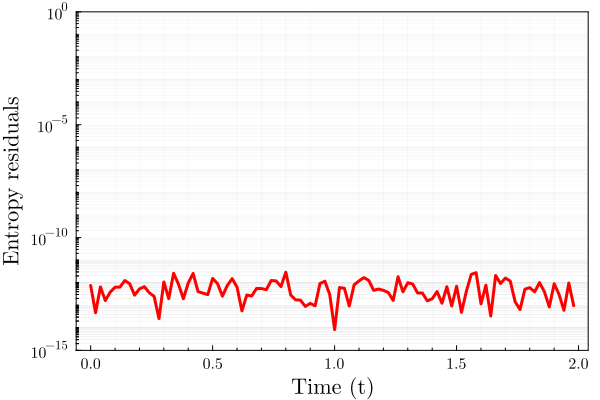}
\caption{Entropy residual for an entropy conservative simulation of the quasi-1D compressible Euler equations.}
\label{fig:ec}
\end{figure}

\subsubsection{Convergence analysis}

Next, we perform convergence tests for the compressible Euler equation using both a reference solution on a highly refined grid and the method of manufactured solutions. We first examine convergence against a fine grid solution on a degree \bnote{$N=2$ mesh of $24000$} elements. The initial condition for this method is given by a small smooth perturbation of a constant state
\begin{align}
\label{ic_euler_ref}
\begin{aligned}
\rho(x, 0) &= 1 - \frac{1}{10}\LRp{1 + \sin\LRp{2 \pi\LRp{x - \frac{1}{10}}}}, \\
u(x, 0) &= 0, \qquad p(x, 0) = \rho^{\gamma}.
\end{aligned} 
\end{align}
The nozzle width $a(x)$ is given by
\[
a(x) = 1 - \frac{1}{5} \LRp{1 + \cos\LRp{2 \pi \LRp{x - \frac{1}{2}} }}
\]
\bnote{We impose values of the analytical solution at domain boundaries.} The solution is evolved until final time $T_{\rm final} =1 / 10$. Table~\ref{tab:ref_sol} shows computed convergence rates, which approach the optimal rate of $h^{N+1}$ \bnote{for $N > 1$. For $N=1$, we observe a sub-optimal rate of convergence of 1.5. } 

\begin{table}
\centering
\bnote{
	\begin{tabular}{|c|c|c|c|c|c|c|}
		\hline & \multicolumn{2}{|c|}{$N=1$} & \multicolumn{2}{c|}{$N=2$} & \multicolumn{2}{c|}{$N=3$} \\
		\hline $\mathrm{K}$ & $L^2$ error & Rate & $L^2$ error & Rate & $L^2$ error & Rate  \\
		\hline 
		2 	& $2.054 \times 10^{-2}$ 	& - 	 	& $1.003 \times 10^{-2}$ & - 			& $1.267 \times 10^{-2}$ 		& - \\
		4 	& $1.335 \times 10^{-2}$	& 0.62 	& $1.055 \times 10^{-2}$ & -0.074		& $4.106 \times 10^{-3}$ 		& 1.63 \\
		8 	& $8.004 \times 10^{-3}$	& 0.74 	& $3.672 \times 10^{-3}$ & 1.52 		& $5.704 \times 10^{-4}$ 		& 2.85 \\
		16 	& $3.798 \times 10^{-3}$	& 1.08 	& $6.631 \times 10^{-4}$ & 2.47 		& $3.884 \times 10^{-5}$ 		& 3.88 \\
		32 	& $1.627 \times 10^{-3}$	& 1.22 	& $8.613 \times 10^{-5}$ & 2.94 		& $2.454 \times 10^{-6}$	 	& 3.98 \\
		\hline
	\end{tabular}
	\begin{tabular}{|c|c|c|c|c|c|c|c|c|}
		\hline & \multicolumn{2}{c|}{$N=4$} & \multicolumn{2}{c|}{$N=5$} \\
		\hline $\mathrm{K}$ & $L^2$ error & Rate & $L^2$ error & Rate \\
		\hline 
		2 	& $7.481 \times 10^{-3}$ 	& - 		& $5.173 \times 10^{-3}$ 		& - \\
		4 	& $1.188 \times 10^{-3}$ 	& 2.66 	& $3.524 \times 10^{-4} $		& 3.88 \\
		8 	& $7.096 \times 10^{-5}$ 	& 4.06 	& $9.723  \times 10^{-6}	$	& 5.18 \\
		16 	& $2.458 \times 10^{-6}$ 	& 4.85 	& $1.594  \times 10^{-7}	$	& 5.93 \\
		32 	& $8.218 \times 10^{-8}$ 	& 4.90 	& $2.525  \times 10^{-9}	$	& 5.98 \\
		\hline
	\end{tabular}	
\caption{$L^2$ errors and convergence rates for the quasi-1D compressible Euler equations relative to a fine-grid reference solution.}
\label{tab:ref_sol}
}
\end{table}

\bnote{We also verify high order rates of convergence for a non-uniform grid in Table~\ref{tab:ref_sol_nonuniform}. The vertex locations of the non-uniform grid are constructed through a mapping of vertex coordinates $x_i$ of a uniform grid on $[-1, 1]$ via $x_i - 0.3 \sin(\pi x_i)$. We observe again that the errors converge at rates approaching optimal rates of convergence for $N > 1$. For $N=1$, the computed rate of convergence again appears to be approaching $1.5$, though at a slower rate than for uniform meshes.}
\begin{table}
\centering
\bnote{
	\begin{tabular}{|c|c|c|c|c|c|c|}
		\hline & \multicolumn{2}{|c|}{$N=1$} & \multicolumn{2}{c|}{$N=2$} & \multicolumn{2}{c|}{$N=3$} \\
		\hline $\mathrm{K}$ & $L^2$ error & Rate & $L^2$ error & Rate & $L^2$ error & Rate  \\
		\hline 
		4 	& $2.054 \times 10^{-2}$ 	& - 	 	& $1.003 \times 10^{-2}$ & - 			& $1.267 \times 10^{-2}$ 		& - \\
		8 	& $1.335 \times 10^{-2}$	& 0.62 	& $1.055 \times 10^{-2}$ & -0.074		& $4.106 \times 10^{-3}$ 		& 1.63 \\
		16 	& $8.004 \times 10^{-3}$	& 0.74 	& $3.672 \times 10^{-3}$ & 1.52 		& $5.704 \times 10^{-4}$ 		& 2.85 \\
		32 	& $3.798 \times 10^{-3}$	& 1.08 	& $6.631 \times 10^{-4}$ & 2.47 		& $3.884 \times 10^{-5}$ 		& 3.88 \\
		64 	& $1.627 \times 10^{-3}$	& 1.22 	& $8.613 \times 10^{-5}$ & 2.94 		& $2.454 \times 10^{-6}$	 	& 3.98 \\
		\hline
	\end{tabular}
	\begin{tabular}{|c|c|c|c|c|c|c|c|c|}
		\hline & \multicolumn{2}{c|}{$N=4$} & \multicolumn{2}{c|}{$N=5$} \\
		\hline $\mathrm{K}$ & $L^2$ error & Rate & $L^2$ error & Rate \\
		\hline 
		4 	& $7.481 \times 10^{-3}$ 	& - 		& $5.173 \times 10^{-3}$ 		& - \\
		8 	& $1.188 \times 10^{-3}$ 	& 2.66 	& $3.524 \times 10^{-4} $		& 3.88 \\
		16 	& $7.096 \times 10^{-5}$ 	& 4.06 	& $9.723  \times 10^{-6}	$	& 5.18 \\
		32 	& $2.458 \times 10^{-6}$ 	& 4.85 	& $1.594  \times 10^{-7}	$	& 5.93 \\
		64 	& $8.218 \times 10^{-8}$ 	& 4.90 	& $2.525  \times 10^{-9}	$	& 5.98 \\
		\hline
	\end{tabular}	
\caption{$L^2$ errors and convergence rates for the quasi-1D compressible Euler equations on a non-uniformly spaced grid using a fine-grid reference solution.}
\label{tab:ref_sol_nonuniform}
}
\end{table}

We next examine convergence for the following manufactured solution: 
\begin{align}\label{sol_euler_manuf}
\begin{aligned}
\rho(x, t) &= \LRp{1 + \frac{1}{10}\sin(2 \pi x) + \frac{1}{10}\cos(2 \pi x)} \exp(-t),\\
u(x, t)     &= \LRp{1 + \frac{1}{10}\sin(2 \pi x) + \frac{1}{10}\cos(2 \pi x)} \exp(-t),\\
p(x, t)     &= \LRp{1 + \frac{1}{10}\sin(2 \pi x) + \frac{1}{10}\cos(2 \pi x)} \exp(-t),\\    
\end{aligned} 
\end{align}
The nozzle width $a(x)$ is given by
\[
a(x) = 1 - \frac{1}{10} \LRp{1 + \cos\LRp{2 \pi \LRp{x - \frac{1}{2}} }}
\]
Source terms are computed using ForwardDiff.jl \cite{revels2016forward}, and fine-grid solutions are interpolated using Interpolations.jl \cite{tim_holy_2023_8066592}. Table~\ref{tab:manufacture-sol} reports both $L^2$ errors and convergence rates for the manufactured solution at final time $T_{\rm final} =1/10$. For odd orders, the computed rates of convergence approach $O(h^{N+1})$. However, for even orders, we appear to observe a suboptimal rate of convergence. This will be analyzed and investigated in future work. 

\begin{table}
\centering
\begin{tabular}{|c|c|c|c|c|c|c|c|}
\hline & \multicolumn{2}{|c|}{$N=1$} & \multicolumn{2}{c|}{$N=2$} & \multicolumn{2}{c|}{$N=3$} \\ 
\hline $N_{\rm elem}$ & $L^2$ error & Rate & $L^2$ error & Rate & $L^2$ error & Rate \\
\hline 5 & $8.514 \times 10^{-1}$      &-& 		  $8.056 \times 10^{-1}$   &- 		 & $3.518 \times 10^{-2}$   &-\\
	 10 & $ 3.107 \times 10^{-1}$   & $1.45$  & $2.354 \times 10^{-1}$   & $2.844$   & $2.307 \times 10^{-3}$   & $3.930$ \\
  	 20 & $8.833 \times 10^{-2}$    & $1.81$  & $3.277 \times 10^{-2}$   & $2.731$   & $3.843 \times 10^{-4}$   & $3.942$ \\
	 40 &  $2.280 \times 10^{-2}$ & $1.95$    & $4.936 \times 10^{-3}$   & $2.537$   & $2.619 \times 10^{-5}$   & $3.715$ \\
 	 80 & $5.712 \times 10^{-3}$  & $1.97$    & $8.505 \times 10^{-4}$   & $2.392$   & $7.910 \times 10^{-7}$   & $3.853$ \\
\hline
\end{tabular}
\newline
\vspace{.5em}
\newline
\begin{tabular}{|c|c|c|c|c|}
\hline & \multicolumn{2}{c|}{$N=4$} & \multicolumn{2}{c|}{$N=5$} \\
\hline $N_{\rm elem}$ & $L^2$ error & Rate & $L^2$ error & Rate \\
\hline 5 	& $5.539 \times 10^{-3}$  	&- 		& $3.961 \times 10^{-2}$ 	 &-  \\ 
	10 	& $2.100 \times 10^{-4}$  	& $4.72$  & $2.749 \times 10^{-3}$ 	& $5.64$ \\
	20 	& $1.029 \times 10^{-5}$ 	& $4.35$  & $5.790 \times 10^{-5}$ 	& $5.85$   \\ 
	40 	& $5.083 \times 10^{-7}$ 	& $4.34$  & $1.169 \times 10^{-6}$  	& $5.95$ \\ 
	80 	& $2.599 \times 10^{-8}$ 	& $4.29$ 	& $1.981 \times 10^{-8}$ 	& $6.00$ \\
\hline
\end{tabular}
\caption{$L^2$ errors and convergence rates for 1D quasi-Euler equation \bnote{using a manufactured solution.}}
\label{tab:manufacture-sol}
\end{table}

\subsubsection{Convergent-divergent nozzle flow}

Finally, we consider both a subsonic and transonic flow through a Laval (e.g., a convergent-divergent) nozzle. Following \cite{osher1983upwind, poinsot1992boundary}, we impose subsonic inflow boundary conditions on density and pressure and subsonic outflow conditions on pressure only. Analytical expressions for steady solutions are given in \cite{pulliam2014fundamental}, and we take the initial condition to be a constant which satisfies exact solution values at the inflow. The final time is taken to be $T_{\rm final} = 5$, such that the solution has reached a steady state. For this problem, we utilize the 9-stage, fourth order low-storage adaptive time-stepping scheme in \cite{ranocha2023error}. 

\begin{figure}
\centering
\includegraphics[width=.49\textwidth]{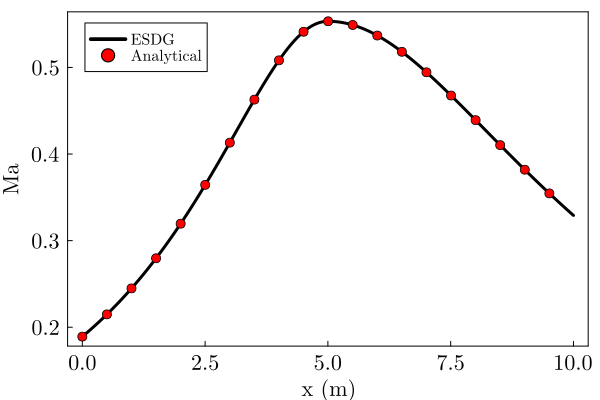}
\includegraphics[width=.49\textwidth]{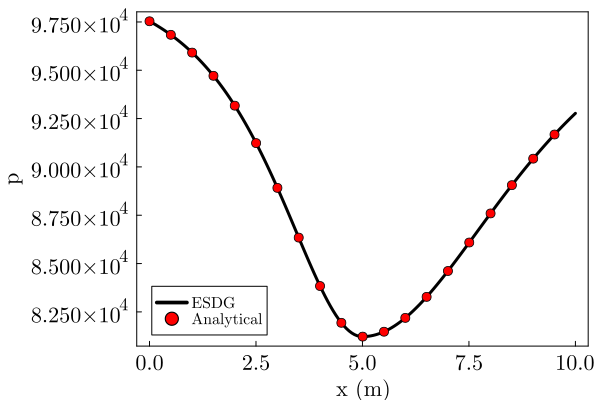}
\caption{Mach number and pressure for subsonic flow through a nozzle.}
\label{fig:sub_sonic_cf}
\end{figure}

Figure~\ref{fig:sub_sonic_cf} and \ref{fig:trans_sonic_cf} show results for subsonic and transonic settings, respectively. For subsonic flow in Figure~\ref{fig:sub_sonic_cf}, we use degree a $N=3$ approximation on a mesh of $16$ uniform elements. The solution does not contain shocks and is accurately approximated. For transonic flow in Figure~\ref{fig:trans_sonic_cf}, we use a degree $N=3$ approximation on a mesh of $64$ uniform elements. Since the solution contains a $C^0$ kink and a shock discontinuity, the solution contains oscillations, which can be removed by postprocessing, shock capturing/limiting, or by adding physical viscosity.

\begin{figure}
\centering
\includegraphics[width=.49\textwidth]{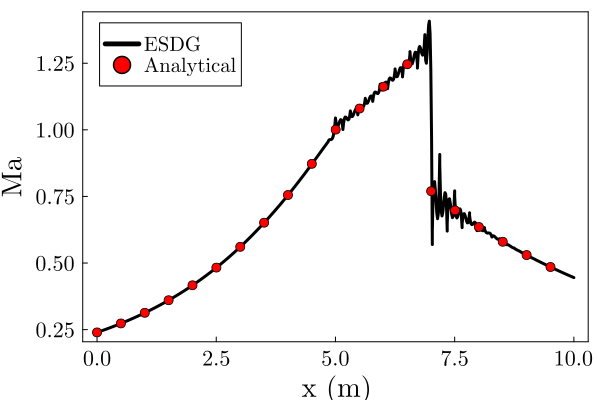}
\includegraphics[width=.49\textwidth]{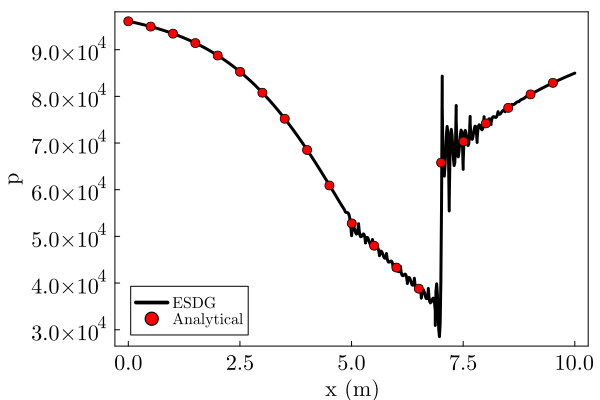}
\caption{Mach number and pressure for transonic flow through a nozzle.}
\label{fig:trans_sonic_cf}
\end{figure}

\section{Conclusion} 
\label{sec:con}

In this work, we introduce entropy stable high order schemes based on flux differencing for the quasi-1D shallow water and quasi-1D compressible Euler equations. Because these equations are not in conservation form, the entropy conservative fluxes used to construct such schemes are no longer symmetric. We introduce a new condition for entropy conservation/stability which accommodates non-symmetric fluxes and simplifies the entropy analysis. We also theoretically justify the high order accuracy, conservation, and well-balanced properties of the new schemes since existing proofs of high order accuracy and conservation rely on symmetry of the fluxes. Future work will focus on incorporating positivity preservation into the proposed schemes.

\section*{Acknowledgements}
Jesse Chan gratefully acknowledges support from National Science Foundation under awards DMS-1943186 and DMS-223148. Khemraj Shukla gratefully acknowledges support from the Air Force Office of Science and Research (AFOSR) under OSD/AFOSR MURI Grant FA9550-20-1-0358. The authors also thank Dr.\ David Craig Penner and Prof.\ David Zingg for helpful discussions, as well as for sharing their Laval nozzle reference solution codes.

\appendix 
\section{Continuous entropy analysis for sufficiently regular solutions}
\label{appendix:entropy_analysis}

\subsection{Quasi-1D shallow water}

Under the assumption that $\frac{\partial a}{\partial t} = 0$, we can rewrite Eq.\ (\ref{eq:SWE_quasi_1D}) as
\begin{align}
a\frac{\partial }{\partial t}\begin{bmatrix}
h\\
hu
\end{bmatrix}+
a\frac{\partial }{\partial x}\begin{bmatrix}
hu\\
hu^2+gh^2/2
\end{bmatrix} + 
\frac{\partial a}{\partial x}\begin{bmatrix}
hu\\
hu^2 + gh^2/2
\end{bmatrix} - 
\frac{\partial a}{\partial x}\begin{bmatrix}
0\\
gh^2/2
\end{bmatrix}= 
\begin{bmatrix}
0\\
- gahb_x
\end{bmatrix}.
\end{align}
Define the following group variables  
\begin{align}
\bm{u} = \begin{bmatrix}
h\\
hu
\end{bmatrix}, \quad \bm{f}(\bm{u}) = \begin{bmatrix}
hu\\
hu^2+gh^2/2
\end{bmatrix}, \quad \bm{P} = \begin{bmatrix}
0\\
gh^2/2
\end{bmatrix}, \quad \bm{S} = \begin{bmatrix}
0\\
-gh^2b_x/2
\end{bmatrix}
.
\label{eq:swe_q1D_vec_def}
\end{align}
Then, we have
\begin{align}
a\frac{\partial \bm{u}}{\partial t}+
a\frac{\partial }{\partial x}\bm{f}(\bm{u})+
\frac{\partial a}{\partial x}(\bm{f}(\bm{u}) - \bm{P})= a\bm{S}.
\label{eq:SWE_q1D_vec}
\end{align}
Let the entropy, $S$, and entropy flux, $F$ be defined for the standard 1D shallow water equations with bathymetry $b$ from \cite{wintermeyer2018entropy}
\begin{align}
S(\bm{u}) = \frac{1}{2}hu^2+\frac{1}{2}gh^2 + ghb, \qquad F(\bm{u}) = \frac{1}{2}hu^3+ghu(h+b).
\end{align}
The \rnote{components of the} entropy variables $\bm{v}$ are then
\begin{align}
    v_1 = gh-\frac{1}{2}u^2 + gb, \qquad v_2 = u.
\end{align}
Multiplying by $\bm{v}^T$ in Eq.\ (\ref{eq:SWE_q1D_vec}), we have
\begin{align}
a\LRp{\frac{\partial S(\bm{u})}{\partial t}+ \frac{\partial F(\bm{u})}{\partial x}}+
\bm{v}^T\frac{\partial a}{\partial x}(\bm{f}(\bm{u}) - \bm{P})= 0.
\label{eq:SWE_q1D_ent}
\end{align}
We also have
\begin{align}
\nonumber
\bm{v}^T(\bm{f}(\bm{u}) - \bm{P}) =& (gh-\frac{1}{2}u^2 + gb)(hu) + hu^3 = F(\bm{u}).
\end{align}
Then, Eq.\ (\ref{eq:SWE_q1D_ent}) can be written as
\begin{align}
a\LRp{\frac{\partial S(\bm{u})}{\partial t}+ \frac{\partial F(\bm{u})}{\partial x}}+
\frac{\partial a}{\partial x}F(\bm{u})= 0\\
\Longrightarrow \frac{\partial aS(\bm{u})}{\partial t}+ a\frac{\partial F(\bm{u})}{\partial x}+ \frac{\partial a}{\partial x}F(\bm{u})= 0\\
\Longrightarrow \frac{\partial aS(\bm{u})}{\partial t}+ \frac{\partial aF(\bm{u})}{\partial x} = 0.
\label{eq:SWE_q1D_ent_final}
\end{align}

\subsection{Quasi-1D compressible Euler}

We now perform a similar analysis for the quasi-1D compressible Euler equations. Assuming that the width of the channel, $a(x)$, does not change over the time, we can write Eq.\ (\ref{eq:Euler_q1D}) as
\begin{align}
a\frac{\partial }{\partial t}\begin{bmatrix}
\rho\\
\rho u\\
E
\end{bmatrix}+
a\frac{\partial }{\partial x}\begin{bmatrix}
\rho u\\
\rho u^2 + p\\
u(E+p)
\end{bmatrix}+
\frac{\partial a}{\partial x}\begin{bmatrix}
\rho u\\
\rho u^2 + p\\
u(E+p)
\end{bmatrix}-
\frac{\partial a}{\partial x}\begin{bmatrix}
0\\
p\\
0
\end{bmatrix}= 0.
\label{eq:Euler_q1D2}
\end{align}
Let
\begin{align}
\bm{u} =\begin{bmatrix}
\rho\\
\rho u\\
E
\end{bmatrix}, \qquad \bm{f}(\bm{u}) =\begin{bmatrix}
\rho u\\
\rho u^2 + p\\
u(E+p)
\end{bmatrix}, \qquad \bm{P} = 
\begin{bmatrix}
0\\
p\\
0
\end{bmatrix}.
\label{eq:Euler_q1D_vec_def}
\end{align}
We have
\begin{align}
a\frac{\partial \bm{u}}{\partial t}+
a\frac{\partial }{\partial x}\bm{f}(\bm{u})+
\frac{\partial a}{\partial x}(\bm{f}(\bm{u}) - \bm{P})= 0.
\label{eq:Euler_q1D_vec}
\end{align}
Let the entropy, $S$, and entropy flux, $F$ be defined for the standard 1D compressible Euler equations, such that
\begin{align}
S(\bm{u}) = \frac{-\rho s}{\gamma -1}, \qquad s = \log\LRp{\frac{p}{\rho^\gamma}}, \qquad F(\bm{u}) = \frac{-\rho u s}{\gamma -1}.
\end{align}
The \rnote{components of the} entropy variables $\bm{v}$ are then
\begin{align}
v_1 = \frac{\gamma - s}{\gamma - 1} - \frac{\rho u^2}{2p}, \qquad v_2= \frac{\rho u}{p}, \qquad v_{3} = -\frac{\rho}{p}.
\end{align}
Multiplying by $\bm{v}^T$ in Eq.\ (\ref{eq:Euler_q1D_vec}), we have
\begin{align}
a\LRp{\frac{\partial S(\bm{u})}{\partial t}+ \frac{\partial F(\bm{u})}{\partial x}}+
\bm{v}^T\frac{\partial a}{\partial x}(\bm{f}(\bm{u}) - \bm{P})= 0.
\label{eq:Euler_q1D_ent}
\end{align}
We also have
\begin{align}
\nonumber
\bm{v}^T(\bm{f}(\bm{u}) - \bm{P}) =& \LRp{\frac{\gamma - s}{\gamma - 1} - \frac{\rho u^2}{2p}}(\rho u) + 
\frac{\rho u}{p}(\rho u^2) -\frac{\rho}{p}\LRp{\frac{1}{2}\rho u^2 + \frac{p}{\gamma-1} + p}u\\
=&\frac{\gamma \rho u}{\gamma - 1} - \frac{\rho u s}{\gamma - 1} - \frac{\rho^2 u^3}{2p} + \frac{\rho^2 u^3}{p} - \frac{\rho^2 u^3}{2p} - \frac{\rho u}{\gamma - 1} - \rho u\\
=& \frac{\gamma \rho u}{\gamma - 1} - \frac{\rho u s}{\gamma - 1} - \frac{\rho u}{\gamma - 1} - \frac{\rho u(\gamma - 1)}{\gamma - 1} = -\frac{\rho u s}{\gamma - 1} = F.
\end{align}
Then, Eq.\ (\ref{eq:Euler_q1D_ent}) can be written as
\begin{align}
a\LRp{\frac{\partial S(\bm{u})}{\partial t}+ \frac{\partial F(\bm{u})}{\partial x}}+
\frac{\partial a}{\partial x}F(\bm{u})= 0\\
\Longrightarrow \frac{\partial aS(\bm{u})}{\partial t}+ a\frac{\partial F(\bm{u})}{\partial x}+
\frac{\partial a}{\partial x}F(\bm{u})= 0.\\
\Longrightarrow \frac{\partial aS(\bm{u})}{\partial t}+ \frac{\partial aF(\bm{u})}{\partial x} = 0.
\label{eq:Euler_q1D_ent_final}
\end{align}

\bibliographystyle{elsarticle-num}
\bibliography{reference.bib}

\begin{thebibliography}{10}
\expandafter\ifx\csname url\endcsname\relax
  \def\url#1{\texttt{#1}}\fi
\expandafter\ifx\csname urlprefix\endcsname\relax\def\urlprefix{URL }\fi
\expandafter\ifx\csname href\endcsname\relax
  \def\href#1#2{#2} \def\path#1{#1}\fi

\bibitem{pi2013cfd}
J.~Slotnick, A.~K. PM, J.~Alonso, D.~Darmofal, W.~Gropp, E.~Lurie,
  D.~Mavriplis, {CFD vision 2030 study: a path to revolutionary computational
  aerosciences} (2013).

\bibitem{wang2013high}
Z.~J. Wang, K.~Fidkowski, R.~Abgrall, F.~Bassi, D.~Caraeni, A.~Cary,
  H.~Deconinck, R.~Hartmann, K.~Hillewaert, H.~T. Huynh, et~al., {High-order
  CFD methods: current status and perspective}, International Journal for
  Numerical Methods in Fluids 72~(8) (2013) 811--845.

\bibitem{bermudez1994upwind}
A.~Bermudez, M.~E. Vazquez, {Upwind methods for hyperbolic conservation laws
  with source terms}, Computers \& Fluids 23~(8) (1994) 1049--1071.

\bibitem{corberan1995tvd}
J.~Corberan, M.~L. Gasc{\'o}n, {TVD schemes for the calculation of flow in
  pipes of variable cross-section}, Mathematical and computer modelling 21~(3)
  (1995) 85--92.

\bibitem{kroner2005numerical}
D.~Kr{\"o}ner, M.~D. Thanh, Numerical solutions to compressible flows in a
  nozzle with variable cross-section, SIAM journal on numerical analysis 43~(2)
  (2005) 796--824.

\bibitem{carlberg2018conservative}
K.~Carlberg, Y.~Choi, S.~Sargsyan, {Conservative model reduction for
  finite-volume models}, Journal of Computational Physics 371 (2018) 280--314.

\bibitem{vazquez1999improved}
M.~E. V{\'a}zquez-Cend{\'o}n, {Improved treatment of source terms in upwind
  schemes for the shallow water equations in channels with irregular geometry},
  Journal of computational physics 148~(2) (1999) 497--526.

\bibitem{garcia2000numerical}
P.~Garcia-Navarro, M.~E. Vazquez-Cendon, {On numerical treatment of the source
  terms in the shallow water equations}, Computers \& Fluids 29~(8) (2000)
  951--979.

\bibitem{balbas2009central}
J.~Balb{\'a}s, S.~Karni, {A central scheme for shallow water flows along
  channels with irregular geometry}, ESAIM: Mathematical Modelling and
  Numerical Analysis 43~(2) (2009) 333--351.

\bibitem{hernandez2011shallow}
G.~Hern{\'a}ndez-Due{\~n}as, S.~Karni, {Shallow water flows in channels},
  Journal of Scientific Computing 48~(1) (2011) 190--208.

\bibitem{murillo2014accurate}
J.~Murillo, P.~Garc{\'\i}a-Navarro, {Accurate numerical modeling of 1D flow in
  channels with arbitrary shape. Application of the energy balanced property},
  Journal of computational Physics 260 (2014) 222--248.

\bibitem{xing2016high}
Y.~Xing, {High order finite volume WENO schemes for the shallow water flows
  through channels with irregular geometry}, Journal of Computational and
  Applied Mathematics 299 (2016) 229--244.

\bibitem{qian2018positivity}
S.~Qian, G.~Li, F.~Shao, Y.~Xing, {Positivity-preserving well-balanced
  discontinuous Galerkin methods for the shallow water flows in open channels},
  Advances in water resources 115 (2018) 172--184.

\bibitem{le1989shock}
P.~Le~Floch, {Shock waves for nonlinear hyperbolic systems in nonconservative
  form}, IMA Preprints Series 2486 (1989).

\bibitem{kroner2008minimum}
D.~Kr{\"o}ner, P.~G. LeFloch, M.-D. Thanh, {The minimum entropy principle for
  compressible fluid flows in a nozzle with discontinuous cross-section},
  ESAIM: Mathematical Modelling and Numerical Analysis 42~(3) (2008) 425--442.

\bibitem{clain2009first}
S.~Clain, D.~Rochette, {First-and second-order finite volume methods for the
  one-dimensional nonconservative Euler system}, Journal of computational
  Physics 228~(22) (2009) 8214--8248.

\bibitem{helluy2012well}
P.~Helluy, J.-M. H{\'e}rard, H.~Mathis, {A well-balanced approximate Riemann
  solver for compressible flows in variable cross-section ducts}, Journal of
  Computational and Applied Mathematics 236~(7) (2012) 1976--1992.

\bibitem{gascon2021numerical}
L.~Gasc{\'o}n, J.~Corber{\'a}n, J.~Garc{\'\i}a-Manrique, {Numerical schemes for
  quasi-1D steady nozzle flows}, Applied Mathematics and Computation 400 (2021)
  126072.

\bibitem{fjordholm2012arbitrarily}
U.~S. Fjordholm, S.~Mishra, E.~Tadmor, {Arbitrarily high-order accurate entropy
  stable essentially nonoscillatory schemes for systems of conservation laws},
  SIAM Journal on Numerical Analysis 50~(2) (2012) 544--573.

\bibitem{fisher2013high}
T.~C. Fisher, M.~H. Carpenter, {High-order entropy stable finite difference
  schemes for nonlinear conservation laws: Finite domains}, Journal of
  Computational Physics 252 (2013) 518--557.

\bibitem{gassner2016split}
G.~J. Gassner, A.~R. Winters, D.~A. Kopriva, {Split form nodal discontinuous
  Galerkin schemes with summation-by-parts property for the compressible Euler
  equations}, Journal of Computational Physics 327 (2016) 39--66.

\bibitem{chen2017entropy}
T.~Chen, C.-W. Shu, {Entropy stable high order discontinuous Galerkin methods
  with suitable quadrature rules for hyperbolic conservation laws}, Journal of
  Computational Physics 345 (2017) 427--461.

\bibitem{chan2018discretely}
J.~Chan, {On discretely entropy conservative and entropy stable discontinuous
  Galerkin methods}, Journal of Computational Physics 362 (2018) 346--374.

\bibitem{gassner2013skew}
G.~J. Gassner, {A skew-symmetric discontinuous Galerkin spectral element
  discretization and its relation to SBP-SAT finite difference methods}, SIAM
  Journal on Scientific Computing 35~(3) (2013) A1233--A1253.

\bibitem{svard2014review}
M.~Sv{\"a}rd, J.~Nordstr{\"o}m, {Review of summation-by-parts schemes for
  initial--boundary-value problems}, Journal of Computational Physics 268
  (2014) 17--38.

\bibitem{tadmor1987numerical}
E.~Tadmor, {The numerical viscosity of entropy stable schemes for systems of
  conservation laws. I}, Mathematics of Computation 49~(179) (1987) 91--103.

\bibitem{parsani2016entropy}
M.~Parsani, M.~H. Carpenter, T.~C. Fisher, E.~J. Nielsen, {Entropy stable
  staggered grid discontinuous spectral collocation methods of any order for
  the compressible Navier--Stokes equations}, SIAM Journal on Scientific
  Computing 38~(5) (2016) A3129--A3162.

\bibitem{wu2021entropy}
X.~Wu, J.~Chan, {Entropy stable discontinuous Galerkin methods for nonlinear
  conservation laws on networks and multi-dimensional domains}, Journal of
  Scientific Computing 87~(3) (2021) 100.

\bibitem{courant1999supersonic}
R.~Courant, K.~O. Friedrichs, {Supersonic flow and shock waves}, Vol.~21,
  Springer Science \& Business Media, 1999.

\bibitem{giles2001analytic}
M.~B. Giles, N.~A. Pierce, {Analytic adjoint solutions for the
  quasi-one-dimensional Euler equations}, Journal of Fluid Mechanics 426 (2001)
  327--345.

\bibitem{castro2013entropy}
M.~J. Castro, U.~S. Fjordholm, S.~Mishra, C.~Par{\'e}s, Entropy conservative
  and entropy stable schemes for nonconservative hyperbolic systems, SIAM
  Journal on Numerical Analysis 51~(3) (2013) 1371--1391.

\bibitem{renac2019entropy}
F.~Renac, {Entropy stable DGSEM for nonlinear hyperbolic systems in
  nonconservative form with application to two-phase flows}, Journal of
  Computational Physics 382 (2019) 1--26.

\bibitem{waruszewski2022entropy}
M.~Waruszewski, J.~E. Kozdon, L.~C. Wilcox, T.~H. Gibson, F.~X. Giraldo,
  {Entropy stable discontinuous Galerkin methods for balance laws in
  non-conservative form: Applications to the Euler equations with gravity},
  Journal of Computational Physics 468 (2022) 111507.

\bibitem{fjordholm2011well}
U.~S. Fjordholm, S.~Mishra, E.~Tadmor, Well-balanced and energy stable schemes
  for the shallow water equations with discontinuous topography, Journal of
  Computational Physics 230~(14) (2011) 5587--5609.

\bibitem{wintermeyer2018entropy}
N.~Wintermeyer, A.~R. Winters, G.~J. Gassner, T.~Warburton, {An entropy stable
  discontinuous Galerkin method for the shallow water equations on curvilinear
  meshes with wet/dry fronts accelerated by GPUs}, Journal of Computational
  Physics 375 (2018) 447--480.

\bibitem{wu2021high}
X.~Wu, E.~J. Kubatko, J.~Chan, {High-order entropy stable discontinuous
  Galerkin methods for the shallow water equations: curved triangular meshes
  and GPU acceleration}, Computers \& Mathematics with Applications 82 (2021)
  179--199.

\bibitem{ismail2009affordable}
F.~Ismail, P.~L. Roe, {Affordable, entropy-consistent Euler flux functions II:
  Entropy production at shocks}, Journal of Computational Physics 228~(15)
  (2009) 5410--5436.

\bibitem{chandrashekar2013kinetic}
P.~Chandrashekar, {Kinetic energy preserving and entropy stable finite volume
  schemes for compressible Euler and Navier-Stokes equations}, Communications
  in Computational Physics 14~(5) (2013) 1252--1286.

\bibitem{ranocha2018comparison}
H.~Ranocha, {Comparison of some entropy conservative numerical fluxes for the
  Euler equations}, Journal of Scientific Computing 76~(1) (2018) 216--242.

\bibitem{ranocha2021preventing}
H.~Ranocha, G.~J. Gassner, Preventing pressure oscillations does not fix local
  linear stability issues of entropy-based split-form high-order schemes,
  Communications on Applied Mathematics and Computation  1--24.

\bibitem{schlottkelakemper2021purely}
M.~Schlottke-Lakemper, A.~R. Winters, H.~Ranocha, G.~J. Gassner, {A purely
  hyperbolic discontinuous {G}alerkin approach for self-gravitating gas
  dynamics}, Journal of Computational Physics 442 (2021) 110467.
\newblock \href {http://arxiv.org/abs/2008.10593} {\path{arXiv:2008.10593}},
  \href {https://doi.org/10.1016/j.jcp.2021.110467}
  {\path{doi:10.1016/j.jcp.2021.110467}}.

\bibitem{ranocha2021efficient}
H.~Ranocha, M.~Schlottke-Lakemper, J.~Chan, A.~M. Rueda-Ram{\'\i}rez, A.~R.
  Winters, F.~Hindenlang, G.~J. Gassner, {Efficient implementation of modern
  entropy stable and kinetic energy preserving discontinuous Galerkin methods
  for conservation laws}, arXiv preprint arXiv:2112.10517 (2021).

\bibitem{ranocha2022adaptive}
H.~Ranocha, M.~Schlottke-Lakemper, A.~R. Winters, E.~Faulhaber, J.~Chan, G.~J.
  Gassner, {Adaptive numerical simulations with Trixi. jl: A case study of
  Julia for scientific computing}, in: Proceedings of the JuliaCon Conferences,
  Vol.~1, 2022, p.~77.

\bibitem{hesthaven2007nodal}
J.~S. Hesthaven, T.~Warburton, {Nodal discontinuous Galerkin methods:
  algorithms, analysis, and applications}, Springer Science \& Business Media,
  2007.

\bibitem{zhang2017positivity}
X.~Zhang, {On positivity-preserving high order discontinuous Galerkin schemes
  for compressible Navier--Stokes equations}, Journal of Computational Physics
  328 (2017) 301--343.

\bibitem{DZANIC2022111501}
T.~Dzanic, F.~Witherden,
  \href{https://www.sciencedirect.com/science/article/pii/S0021999122005630}{Positivity-preserving
  entropy-based adaptive filtering for discontinuous spectral element methods},
  Journal of Computational Physics 468 (2022) 111501.
\newblock \href {https://doi.org/https://doi.org/10.1016/j.jcp.2022.111501}
  {\path{doi:https://doi.org/10.1016/j.jcp.2022.111501}}.
\newline\urlprefix\url{https://www.sciencedirect.com/science/article/pii/S0021999122005630}

\bibitem{lin2023positivity}
Y.~Lin, J.~Chan, I.~Tomas, {A positivity preserving strategy for entropy stable
  discontinuous Galerkin discretizations of the compressible Euler and
  Navier-Stokes equations}, Journal of Computational Physics 475 (2023) 111850.

\bibitem{chan2022efficient}
J.~Chan, C.~G. Taylor, {Efficient computation of Jacobian matrices for entropy
  stable summation-by-parts schemes}, Journal of Computational Physics 448
  (2022) 110701.

\bibitem{warnecke2004solution}
G.~Warnecke, N.~Andrianov, {On the solution to the Riemann problem for the
  compressible duct flow}, SIAM Journal on Applied Mathematics 64~(3) (2004)
  878--901.

\bibitem{wintermeyer2017entropy}
N.~Wintermeyer, A.~R. Winters, G.~J. Gassner, D.~A. Kopriva, {An entropy stable
  nodal discontinuous Galerkin method for the two dimensional shallow water
  equations on unstructured curvilinear meshes with discontinuous bathymetry},
  Journal of Computational Physics 340 (2017) 200--242.

\bibitem{crean2018entropy}
J.~Crean, J.~E. Hicken, D.~C. D.~R. Fern{\'a}ndez, D.~W. Zingg, M.~H.
  Carpenter, {Entropy-stable summation-by-parts discretization of the Euler
  equations on general curved elements}, Journal of Computational Physics 356
  (2018) 410--438.

\bibitem{fisher2013discretely}
T.~C. Fisher, M.~H. Carpenter, J.~Nordstr{\"o}m, N.~K. Yamaleev, C.~Swanson,
  {Discretely conservative finite-difference formulations for nonlinear
  conservation laws in split form: theory and boundary conditions}, Journal of
  Computational Physics 234 (2013) 353--375.

\bibitem{bezanson2017julia}
J.~Bezanson, A.~Edelman, S.~Karpinski, V.~B. Shah, Julia: A fresh approach to
  numerical computing, SIAM review 59~(1) (2017) 65--98.

\bibitem{rackauckas2017differentialequations}
C.~Rackauckas, Q.~Nie, {Differentialequations. jl--a performant and
  feature-rich ecosystem for solving differential equations in Julia}, Journal
  of open research software 5~(1) (2017).

\bibitem{revels2016forward}
J.~Revels, M.~Lubin, T.~Papamarkou, {Forward-mode automatic differentiation in
  Julia}, arXiv preprint arXiv:1607.07892 (2016).

\bibitem{garcia19921}
P.~Garcia-Navarro, F.~Alcrudo, J.~Saviron, {1-D open-channel flow simulation
  using TVD-McCormack scheme}, Journal of Hydraulic Engineering 118~(10) (1992)
  1359--1372.

\bibitem{tim_holy_2023_8066592}
T.~Holy, M.~Kittisopikul, A.~Wadell, T.~Aschan, S.~Lyon, M.~Lucas, R.~Deits,
  P.~Mathur, S.~Tambe, V.~Kaisermayer, N5N3, J.-M. Lihm, J.~Weidner, E.~E.
  Brandås, C.~Baldassi, C.~Stocker, getzze, MatFi, M.~Baran, Y.~Ahn, J.~Chen,
  S.~G. Johnson, Y.~L. Gagnon, M.~Millea, B.~Pasquier, A.~Arslan, D.~Karrasch,
  H.~Ranocha,
  \href{https://doi.org/10.5281/zenodo.8066592}{Juliamath/interpolations.jl:
  v0.13.2} (Jun. 2023).
\newblock \href {https://doi.org/10.5281/zenodo.8066592}
  {\path{doi:10.5281/zenodo.8066592}}.
\newline\urlprefix\url{https://doi.org/10.5281/zenodo.8066592}

\bibitem{osher1983upwind}
S.~Osher, S.~Chakravarthy, {Upwind schemes and boundary conditions with
  applications to Euler equations in general geometries}, Journal of
  Computational Physics 50~(3) (1983) 447--481.

\bibitem{poinsot1992boundary}
T.~J. Poinsot, S.~Lelef, Boundary conditions for direct simulations of
  compressible viscous flows, Journal of computational physics 101~(1) (1992)
  104--129.

\bibitem{pulliam2014fundamental}
T.~H. Pulliam, D.~W. Zingg, Fundamental algorithms in computational fluid
  dynamics, Springer Science \& Business Media, 2014.

\bibitem{ranocha2023error}
H.~Ranocha, A.~R. Winters, H.~G. Castro, L.~Dalcin, M.~Schlottke-Lakemper,
  G.~J. Gassner, M.~Parsani, {On error-based step size control for
  discontinuous Galerkin methods for compressible fluid dynamics},
  Communications on Applied Mathematics and Computation (2023) 1--37.

\end{thebibliography}

\end{document}